\numberwithin{equation}{section}
\numberwithin{figure}{section}
\theoremstyle{plain}
\newtheorem{thm}{\protect\theoremname}[section]
\theoremstyle{plain}
\newtheorem{prop}{\protect\propositionname}[section]
\theoremstyle{plain}
\newtheorem{cor}[thm]{\protect\corollaryname}
\theoremstyle{plain}
\newtheorem{lem}[thm]{\protect\lemmaname}
\newcommand{\N}{\mathbb{N}}
\newcommand{\1}{{\large\mathbb{1}}}
\providecommand{\corollaryname}{Corollary}
\providecommand{\lemmaname}{Lemma}
\providecommand{\theoremname}{Theorem}
\providecommand{\propositionname}{Proposition}
\author{Sheldy Ombrosi}
\author{Israel P. Rivera-R\'{\i}os}
\address{Departamento de Matem\'atica, Universidad Nacional del Sur (UNS), Bah\'{\i}a
Blanca, Argentina and INMABB, Universidad Nacional del Sur (UNS)-CONICET,
Bah\'{\i}a Blanca, Argentina.}
\email{sombrosi@uns.edu.ar (Sheldy Ombrosi) }
\email{israel.rivera@uns.edu.ar (Israel P. Rivera-R\'{\i}os)}
\thanks{Both authors were partially supported by Agencia I+d+i PICT 2018-2501. The second author was partially supported by Agencia I+d+i PICT 2019-00018}
\begin{document}

\title{Weighted $L^{p}$ estimates on the infinite rooted $k$-ary tree}
\maketitle
\begin{abstract}In this paper, building upon ideas of Naor and Tao \cite{NT} and  continuing the study initiated in \cite{ORRS} by the authors and Safe, sufficient conditions are provided for weighted weak type and strong type $(p,p)$ estimates with $p>1$ for the centered maximal function on the infinite rooted $k$-ary tree to hold. Consequently a wider class of weights for those strong and weak type $(p,p)$ estimates than the one obtained in \cite{ORRS} is provided. Examples showing that the Sawyer type testing condition and the $A_p$ condition do not seem precise in this context are supplied as well. We also prove that strong  and weak type estimates are not equivalent, highlighting the pathological nature of the theory of weights in this setting.  Two weight counterparts of our conditions will be obtained as well.
\end{abstract}

\section{Introduction and Main results}
In the seventies Muckenhoupt \cite{M} established the following result.
If $1<p<\infty$ and $M$ stands for the Hardy-Littlewood the following
statements are equivalent 
\begin{itemize}
\item $w\in A_{p}$, namely 
\begin{equation}
[w]_{A_{p}}=\sup_{B}\frac{1}{|B|}\int_{B}w\left(\frac{1}{|B|}\int_{B}w^{-\frac{1}{p-1}}\right)^{p-1}<\infty\label{eq:ApM}
\end{equation}
where each $B$ is a ball of $\mathbb{R}^{n}$.
\item The strong type estimate
\begin{equation}
\|Mf\|_{L^{p}(w)}\lesssim\|f\|_{L^{p}(w)}\label{eq:SM}
\end{equation}
holds. 
\item The weak type estimate
\begin{equation}
\|Mf\|_{L^{p,\infty}(w)}\lesssim\|f\|_{L^{p}(w)}\label{eq:WM}
\end{equation}
holds. 
\end{itemize}
Throughout the next decades this seminal work lead to the development
of the theory of weights that has been a very fruitful area since
then. Also a number of generalizations for a number of settings has
been developed since them, for spaces of homogeneous type \cite{SWh,ABI,HK,K1,K2} ,
spaces of non-homogeneous type \cite{GCM,OP,TTV}, or even in the discrete setting \cite{BM,NW, HS,ST,STInf}. This note
can be framed in that last group. Let us provide our setting.

Given $k\geq2$ we denote by $T^{k}$ the infinite rooted $k$-ary
tree, namely, the infinite rooted tree such that each vertex has $k$
children. We shall write just $T$ in case there is no place to confusion.
Abusing of notation, we will also use $T$ to denote the set of the
vertices of the tree. We can define the metric measure space $(T,d,\mu)$
where $d$ is the usual tree metric, namely $d(x,y)$ is the number
of edges of the unique path between $x$ and $y$, and $\mu$ is the
counting measure defined on parts of the set of vertices. Given $A\subset T$
we denote $|A|=\mu(A)$ and 
\[
\int_{A}f(x)dx=\sum_{x\in A}f(x).
\]
We will also denote 
\begin{align*}
Mf(x) & =\sup_{r\in\mathbb{N}\cup\{0\}}\frac{1}{|B(x,r)|}\int_{B(x,r)}|f(y)|dy,\\
M^{\circ}f(x) & =\sup_{r\in\mathbb{N}\cup\{0\}}A_{r}^{\circ}f(x)
\end{align*}
where $A_{r}^{\circ}f(x)=\frac{1}{|S(x,r)|}\int_{S(x,r)}|f(y)|dy$
and
\begin{align*}
B(x,r) & =\{y\in T:d(x,y)\leq r\},\\
S(x,r) & =\{y\in T:d(x,y)=r\}.
\end{align*}
In contrast with the standard Euclidean setting, it makes sense to
consider this kind of maximal function since $S(x,r)$ are not sets
of measure $0$. Actually $|S(x,r)|\simeq k^{r}$. Furthermore, for
$k\geq2$, we have that $M^{\circ}\simeq M$ with constant independent
of $k$ (see \cite[Proposition 2.1]{ORRS}). 
At this point is important to note that the measure on $T$ is far from being doubling, even the upper doubling condition of \cite{HPub} fails. That fact disables the possibility of using the classical machinery available to deal the maximal function on $T$. However Naor and Tao \cite{NT} managed to overcome those difficulties using a combinatorial and expander argument to show that $M$ is of weak type $(1,1)$ with constant independent of $k$. Note that such a result
was obtained as well by Cowling, Meda and Setti in \cite{CMS} and that the strong type $(p,p)$
had already been obtained by Nevo and Stein in \cite{NS}.

Pushing forward techniques in \cite{NT}, the study of the theory
of weights for the infinite rooted $k$-ary tree was initiated in
\cite{ORRS}. The main results there were the following Fefferman-Stein
inequality
\[
w\left(\left\{ x\in T\,:\,Mf(x)>t\right\} \right)\lesssim\frac{1}{t}\int_{T}|f(x)|M(w^{s})(x)^{\frac{1}{s}}dx\qquad s>1
\]
and the quite surprising fact that there exists a weight $w$ such
that for any positive integer $n$, if $M^{n}w=(M\circ\stackrel{n\text{ times}}{\dots}\circ M)(w)$,
then $M^{n}w(x)\lesssim w(x)$ is not sufficient for the weighted
weak type $(1,1)$  nor even the weighted $(p,p)$ type for every $p>1$ to hold. This shows that the choice for
the maximal function in the right hand side of the Fefferman-Stein
inequality is sharp, in the sense that $M(w^{s})(x)^{\frac{1}{s}}$
cannot be replaced for any number of iterations of the maximal function. 

We would like to note as well that in \cite{ORRS} it was showed that there exist non trivial weights $w$ such that $M(w^s)(x)^\frac{1}{s}\lesssim w(x)$ and therefore there are weights such that the weighted weak type $(1, 1)$ estimate holds and hence the weighted strong type $(p, p)$ estimate for $p>1$ holds as well. A natural question is whether it is possible to obtain more general classes of weights for $p > 1$. In this work we will pursue that direction providing novel results in the range $p>1$. We will show that the class of suitable weights for this range is wider than the one obtained for the endpoint estimate in \cite{ORRS}.  

Studying this kind of weighted estimates may have as well connections with ergodic theory. 
Observe that when $k$ is odd $T$ is almost identifiable with the free group on $\frac{k+1}{2}$ generators. Naor and Tao settled settled the weak type $(1,1)$ estimate for the maximal function on the $k$-ary tree, but the non-amenability of the free group disabled the possibility of using standard arguments to transfer that result to that setting. Later on Tao \cite{T} disproved that result by showing that $f\in L^1$ is not enough for averaging operators to converge in that setting. That result of Tao implied the latter does not hold either since via convergence theorems in \cite{NS} it is possible to deduce the pointwise convergence from the weak type $(1,1)$ estimates.
On the other hand an earlier positive result due to Bufetov \cite{B} showed that strengthening conditions in Tao's result, namely, assuming that $f\in L\log L$ then those averaging operators converge. 

Coming back to weighted estimates, a first natural question would be studying the relationship between
some analogue of the $A_{p}$ condition as stated in (\ref{eq:ApM})
above, that in this case would read as $w\in A_{p}$ if
\begin{equation}\label{eq:ApCond}
\sup_{x\in T,r\geq0}\frac{1}{|B(x,r)|}\int_{B(x,r)}w\left(\frac{1}{|B(x,r)|}\int_{B(x,r)}w^{-\frac{1}{p-1}}\right)^{p-1}<\infty,
\end{equation}
and strong and weak type estimates. We shall prove that there exists
weights $w$ such that the weak type $(1,1)$ holds, and consequently
$M$ is bounded on $L^{p}(w)$ for every $p>1$, but also $w\not\in A_{p}$
for every $p>1$ (See Proposition \ref{Prop:ThmNeg1}). Consequently the
set of the balls $\{B(x,r)\}_{x\in T,r\geq0}$ is not a Muckenhoupt
basis as the ones considered, for instance, in \cite{DMRO}, since $A_{p}$ does not
characterize the strong type $(p,p)$ of the maximal function, in
contrast with the classical setting that we presented above in which
(\ref{eq:ApM}) and (\ref{eq:SM}) were equivalent conditions. Note
that our results differ from other results in the literature. For instance in Badr and Martell \cite{BM}, a doubling measure is considered, endowing the graph with a structure of space of homogeneous type and hence enabling them to use covering lemmas. In the case of Hebisch, Steger \cite{HS}, the setting considered allow them to have a Calder\'on-Zygmund type decomposition, which is not available in our case either.

Another pathological situation in our setting is that there exist
weights such that (\ref{eq:WM}) holds but (\ref{eq:SM}) does not
(See Proposition \ref{Prop:Neg2}). Due to the aforementioned results it seems clear that $A_{p}$ type conditions do not seem neither precise nor suitable in
this setting. Observe that if we assume that the strong type $(p,p)$ with respect to a weight $w$
holds and $E,F\subset T$ then we have that 
\begin{equation}
\begin{split} & \1\otimes w\left(\{(x,y)\in E\times F:\ d(x,y)=r\}\right)\simeq k^{r}\int_{E}A_{r}^{\circ}(\chi_{F}w)=k^{r}\int_{F}A_{r}^{\circ}(\chi_{E})w\\
 &\leq k^{r}w(F)^{1-\frac{1}{p}}\left(\int_{T}A_{r}^{\circ}(\chi_{E})^pw\right)^{\frac{1}{p}} \lesssim k^{r}w(F)^{1-\frac{1}{p}}w(E)^{\frac{1}{p}}
\end{split}
\label{eq:NecCond}
\end{equation}
where $\1\otimes w$ stands for the product measure, namely
\[
\1\otimes w(A\times B)=|A|w(B).
\]
It is not difficult to see that in general the necessary condition (\ref{eq:NecCond}) will be not a sufficient condition, nor  even for the weighted weak type $(p,p)$ estimate. However assuming a slightly better decay on $k$ we manage to obtain a sufficient condition which is one of the main results of this paper. 

\begin{thm}
\label{thm:Suff}Let $k\geq2$ be an integer. Let $p>1$ and let $w$
be a weight. Assume that there exist $0<\beta<1$ and $\beta\leq\alpha<p$
such that 
\begin{equation}
\1\otimes w\left(\{(x,y)\in E\times F:\ d(x,y)=r\}\right)\lesssim k^{r\beta}w(E)^{\frac{\alpha}{p}}w(F)^{1-\frac{\alpha}{p}}.\label{eq:SuffCond}
\end{equation}
Then 
\begin{equation}
\|Mf\|_{L^{\frac{\beta}{\alpha}p,\infty}(w)}\lesssim\|f\|_{L^{\frac{\beta}{\alpha}p}(w)}.\label{eq:Weakbpalpha}
\end{equation}
Furthermore, if $\beta<\alpha$ then 
\begin{equation}
\|Mf\|_{L^{p}(w)}\lesssim\|f\|_{L^{p}(w)}\label{eq:Strongpp}
\end{equation}
and also 
\begin{equation}
\|Mf\|_{L^{p'}(\sigma_{p})}\lesssim\|f\|_{L^{p'}(\sigma_{p})}\label{eq:StrongDual}
\end{equation}
where $\sigma_{p}=w^{-\frac{1}{p-1}}$.
\end{thm}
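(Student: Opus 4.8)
The plan is to establish the weak type bound \eqref{eq:Weakbpalpha} first and to deduce the two strong type bounds from it. Throughout I replace $M$ by $M^{\circ}$ (legitimate by \cite[Proposition 2.1]{ORRS}) and take $f\geq 0$. Since $M^{\circ}$ trivially satisfies $\|M^{\circ}f\|_{L^{\infty}(w)}\leq\|f\|_{L^{\infty}(w)}$, once \eqref{eq:Weakbpalpha} is available and $\beta<\alpha$ (which forces $q:=\tfrac{\beta}{\alpha}p<p$), Marcinkiewicz interpolation between the weak type $(q,q)$ estimate and the trivial $L^{\infty}$ bound yields the strong type $(p,p)$ estimate \eqref{eq:Strongpp}. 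Thus the genuine content is the weak type inequality.

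To prove \eqref{eq:Weakbpalpha}, fix $\lambda>0$ and put $\Omega_{\lambda}=\{M^{\circ}f>\lambda\}$. I linearize by choosing, for each $x\in\Omega_{\lambda}$, a radius $r(x)$ with $A^{\circ}_{r(x)}f(x)>\lambda$, and partition $\Omega_{\lambda}=\bigsqcup_{r}\Omega_{r}$ according to the value of $r(x)$. For each $r$,
\[
\lambda\, w(\Omega_{r})\leq\int_{\Omega_{r}}A^{\circ}_{r}f\,w\simeq\frac{1}{k^{r}}\sum_{y}f(y)\,w\big(S(y,r)\cap\Omega_{r}\big).
\]
Writing $f$ through its super-level sets $\{f>t\}$ and feeding the pair $\big(\{f>t\},\Omega_{r}\big)$ into the testing hypothesis \eqref{eq:SuffCond}, then summing the geometric series $\sum_{r}k^{-r(1-\beta)p/\alpha}$ (finite precisely because $\beta<1$), gives
\[
w(\Omega_{\lambda})\lesssim\lambda^{-p/\alpha}\Big(\int_{0}^{\infty}w(\{f>t\})^{\alpha/p}\,dt\Big)^{p/\alpha}.
\]
Applying this to $f\chi_{\{f\leq\lambda\}}$ at level $\lambda/2$, the integral runs only over $t\in(0,\lambda)$, and a H\"older inequality in $t$ with exponents $(p/\alpha,(p/\alpha)')$ bounds it by a constant times $\lambda^{1-\beta}\big(\int f^{q}w\big)^{\alpha/p}$: the auxiliary integral $\int_{0}^{\lambda}t^{-(q-1)\alpha/(p-\alpha)}\,dt$ converges exactly when $q<p/\alpha$, i.e. when $\beta<1$, and this is what reproduces the exponent $q=\tfrac{\beta}{\alpha}p$. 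Hence the small-values part obeys $w(\{M^{\circ}(f\chi_{\{f\leq\lambda\}})>\lambda/2\})\lesssim\lambda^{-q}\|f\|_{L^{q}(w)}^{q}$.

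The main obstacle is the part of $f$ with values $\gg\lambda$: for $f\chi_{\{f>\lambda\}}$ the same H\"older estimate diverges at infinity, which reflects the fact that applying \eqref{eq:SuffCond} scale by scale overcounts the contribution of very large values to a single sphere average. The remedy I expect to be forced to implement is to couple the size $2^{j}\lambda$ of the values of $f$ with the radius $r$: a value much larger than $\lambda$ can push $A^{\circ}_{r}f$ above $\lambda$ only through radii $r$ large enough to dilute it over $S(x,r)$, and at such radii the gain $k^{-r(1-\beta)p/\alpha}$ is decisive. Concretely, I decompose each $\Omega_{r}$ according to the dyadic value-scales of $f$, estimate every piece by \eqref{eq:SuffCond} keeping the radius $r$ and the scale $j$ together, and distribute a summable budget across the pairs (value-scale, radius) rather than forming the single integral above; making this double summation close at the exponent $\tfrac{\beta}{\alpha}p$ is the technical heart of the proof.

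Finally, for the dual estimate \eqref{eq:StrongDual} I use the pointwise identities $\sigma_{p}^{1-p}=w$ and $\sigma_{p}^{p}w=\sigma_{p}$. Duality then converts the $L^{p'}(\sigma_{p})$ boundedness of $M^{\circ}$ into a uniform $L^{p}(w)$ bound for the adjoint multi-scale averaging operators
\[
U\longmapsto\sum_{y\,:\,d(x,y)=\rho(y)}\frac{U(y)}{|S(y,\rho(y))|},
\]
the sum running over a radius selection $\rho(\cdot)$. These adjoints are again assembled from the sphere averages $A^{\circ}_{r}$, so they are controlled by the very same testing condition \eqref{eq:SuffCond} and the same summation over scales used for \eqref{eq:Strongpp}, with $\beta<\alpha$ once more guaranteeing summability at the relevant exponent.
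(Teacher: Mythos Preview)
Your Lorentz-type estimate
\[
w(\{M^{\circ}f>\lambda\})\lesssim\lambda^{-p/\alpha}\Bigl(\int_{0}^{\infty}w(\{f>t\})^{\alpha/p}\,dt\Bigr)^{p/\alpha}
\]
and its consequence for $f\chi_{\{f\le\lambda\}}$ are correct and give a neat alternative to the paper's treatment of small values. The difficulty you flag, the tail $f\chi_{\{f>\lambda\}}$, is however not merely a bookkeeping matter: your heuristic that large values act ``only through radii $r$ large enough to dilute them'' points in the wrong direction, since a value $\sim 2^{j}\lambda$ certainly forces $A_{r}^{\circ}f>\lambda$ at \emph{small} radii $k^{r}\lesssim 2^{j}$, where your decay $k^{-r(1-\beta)p/\alpha}$ gives nothing. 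What the paper actually does is couple in the opposite way: for each fixed $r$ it separates the tail $\{f\ge \tfrac12 k^{r}\}$ and handles it by the pointwise bound $w(S(y,r))\lesssim k^{r\beta p/\alpha}w(y)$ (obtained from the hypothesis with $E=\{y\}$, $F=S(y,r)$), while values $2^{n}\le k^{r}$ are budgeted via a pigeonhole with weights $(2^{n}/k^{r})^{\gamma}$ for a carefully chosen $\gamma$. Neither of these two ingredients appears in your outline, and without them the ``double summation'' you allude to cannot be closed at the exponent $\tfrac{\beta}{\alpha}p$.

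There is a second gap in the dual estimate. Your route to \eqref{eq:Strongpp} is Marcinkiewicz interpolation with the $L^{\infty}$ endpoint; this gives the $L^{p}(w)$ boundedness of $M^{\circ}$ but produces no scale-by-scale control of the individual $A_{r}^{\circ}$. Yet your linearized adjoint is precisely $T^{*}h=\sum_{r}A_{r}^{\circ}(h\chi_{\{\rho=r\}})$, and to bound this in $L^{p}(w)$ uniformly in the selection $\rho$ you need summable operator norms $\|A_{r}^{\circ}\|_{L^{p}(w)\to L^{p}(w)}$, not just the bound for the supremum. The paper obtains exactly this stronger statement,
\[
\sum_{r=0}^{\infty}\|A_{r}^{\circ}f\|_{L^{p}(w)}\lesssim\|f\|_{L^{p}(w)},
\]
by integrating its level-set lemma in $\lambda$, and then derives \eqref{eq:StrongDual} in one line from self-adjointness of each $A_{r}^{\circ}$. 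Your interpolation shortcut, while valid for \eqref{eq:Strongpp}, leaves \eqref{eq:StrongDual} unproven.
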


At this point some remarks are in order. First observe that that if $0<\frac{\beta}{\alpha}<1$ (\ref{eq:Strongpp})
follows from (\ref{eq:Weakbpalpha}) by interpolation, however that
is not the case for (\ref{eq:StrongDual}). We will settle both estimates
relying upon the fact that under that assumption on $\beta$ and $\alpha$,
actually we can show something slightly stronger, namely that 
\[
\sum_{r=0}^{\infty}\|A_{r}^{\circ}f\|_{L^{p}(w)}\lesssim\|f\|_{L^{p}(w)}.
\]
Our condition is also sharp in the sense that if $\beta=\alpha$ we can provide a weight for which the weak type $(p,p)$ holds but the strong type $(p,p)$ does not and consequently the weak type $(q,q)$ fails as well for every $q<p$ (see Proposition \ref{Prop:Neg2}). 

Observe also that in the classical setting $w\in A_p$ implies that $\sigma\in A_{p'}$ and hence no additional argument is required to show that \eqref{eq:StrongDual} holds. Hence, even though we do not necessarily have that the fact that $w$ satisfies \eqref{eq:SuffCond} implies that $\sigma$ satisfies an analogue condition, Theorem \ref{thm:Suff} allows to derive from \eqref{eq:SuffCond} as well to settle \eqref{eq:StrongDual}. This, however may not be the case for the weak type estimates (see Proposition \ref{Prop:Neg2}).

Checking the sufficient condition in Theorem \ref{thm:Suff} directly could be quite involved. The following Corollary reduces the question to a condition that is easier to check than \eqref{eq:SuffCond}.
Before presenting that result it is convenient to introduce a bit of notation. In what follows we shall call $T_{0}$ the set given by the root of the tree, $T_{j}$ the set of the children of the vertices in $T_{j-1}$ and so on.
Observe that the sets considered in the condition in \eqref{eq:SuffCond} may have non-empty intersection with several different levels. With the condition in the following Corollary it suffices to study the behavior of the weight in each level instead.
\begin{cor}
\label{cor:SuffAux}Let $1< p<\infty$, and let $w$ be a weight such that there exists a real number $\delta<1$
such that if $x\in T_{j}$ and $|i-j|\leq r$ we have that
\begin{equation}
w(T_i\cap S(x,r))\lesssim k^{\frac{r+i-j}{2}(p-\delta)}k^{r\delta}w(x).\label{eq:Suffweaker}
\end{equation}
Then 
\begin{equation*}
\|Mf\|_{L^{p,\infty}(w)}\lesssim\|f\|_{L^{p}(w)}.
\end{equation*}
Furthermore, if $p<q$ then 
\begin{equation*}
\|Mf\|_{L^{q}(w)}\lesssim\|f\|_{L^{q}(w)}
\end{equation*}
and also 
\begin{equation*}
\|Mf\|_{L^{q'}(\sigma_{q})}\lesssim\|f\|_{L^{q'}(\sigma_{q})}
\end{equation*}
where $\sigma_{q}=w^{-\frac{1}{q-1}}$.
\end{cor}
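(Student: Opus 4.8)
The plan is to deduce the level-free condition \eqref{eq:SuffCond} from the level-wise hypothesis \eqref{eq:Suffweaker} for an entire one-parameter family of admissible pairs $(\alpha,\beta)$, and then to read off each conclusion directly from Theorem \ref{thm:Suff}. Fix $E,F\subset T$ and $r\geq 0$. Using that the sphere kernel $\1_{\{d(x,y)=r\}}$ is symmetric, I would first write
\[
\1\otimes w\left(\{(x,y)\in E\times F:\ d(x,y)=r\}\right)=\sum_{x\in E}w\big(S(x,r)\cap F\big)=\sum_{y\in F}w(y)\,\big|S(y,r)\cap E\big|,
\]
and then split $E=\bigsqcup_j E_j$ and $F=\bigsqcup_i F_i$ with $E_j=E\cap T_j$, $F_i=F\cap T_i$. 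A block $\Sigma_{ij}$ is nonzero only when $|i-j|\leq r$, $i+j\geq r$ and $r+i-j$ is even; in that case every pair at distance $r$ meets its common ancestor at level $(i+j-r)/2$, after ascending $\tfrac{r-i+j}{2}$ and descending $u:=\tfrac{r+i-j}{2}$ edges.

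Second, I would record two one-sided bounds for each block. Centering \eqref{eq:Suffweaker} at $x$ and summing over $x\in E_j$ gives
\[
\Sigma_{ij}\leq\sum_{x\in E_j}w\big(S(x,r)\cap T_i\big)\lesssim k^{u(p-\delta)+r\delta}\,w(E_j)=:c_u\,w(E_j),
\]
while the purely combinatorial sphere count $|S(y,r)\cap T_j|\lesssim k^{(r+j-i)/2}=k^{\,r-u}$, summed against $w$ over $y\in F_i$, gives $\Sigma_{ij}\lesssim k^{\,r-u}w(F_i)=:d_u\,w(F_i)$. The key structural observation is that $c_u$ and $d_u$ depend on $(i,j)$ only through $u$; grouping the blocks by the value of $u$ and using $\sum\min\{\cdot,\cdot\}\leq\min\{\sum\cdot,\sum\cdot\}$ collapses the two level sums and yields the one-dimensional estimate
\[
\1\otimes w\left(\{(x,y)\in E\times F:\ d(x,y)=r\}\right)\ \lesssim\ \sum_{u=0}^{r}\min\big\{c_u\,w(E),\,d_u\,w(F)\big\}.
\]

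Third, I would analyse this min-sum, in which $c_u$ grows geometrically (ratio $k^{p-\delta}$) and $d_u$ decays geometrically (ratio $k^{-1}$). For any $\theta\in(0,1)$ one has $\min\{c_uw(E),d_uw(F)\}\leq c_u^{\theta}d_u^{1-\theta}\,w(E)^{\theta}w(F)^{1-\theta}$ with $c_u^{\theta}d_u^{1-\theta}=k^{r\kappa(\theta)+u\gamma(\theta)}$, where $\kappa(\theta)=1-\theta(1-\delta)$ and $\gamma(\theta)=\theta(p-\delta+1)-1$. If $\theta<\theta_0:=\tfrac{1}{p+1-\delta}$ then $\gamma(\theta)<0$, the $u$-series converges, and I obtain \eqref{eq:SuffCond} with $\beta=\kappa(\theta)$ and $\alpha=q\theta$; choosing any $q>p$ and $\theta\in\big(\tfrac{1}{q+1-\delta},\theta_0\big)$ makes $0<\beta<\alpha<q$, so Theorem \ref{thm:Suff} applied at exponent $q$ delivers the strong type \eqref{eq:Strongpp} together with its dual \eqref{eq:StrongDual} for $\sigma_q$.

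Finally, the genuine difficulty lies in the weak type at the exact exponent $p$: since the weak exponent produced by Theorem \ref{thm:Suff} equals $\kappa(\theta)/\theta=\tfrac1\theta-(1-\delta)$, hitting the value $p$ forces $\theta=\theta_0$, precisely the borderline where $\gamma(\theta_0)=0$ and the crude geometric-mean bound above loses a factor $\sim r$ from the now-constant summand. I would avoid this loss by estimating the min-sum through its largest term: the increasing sequence $c_uw(E)$ and the decreasing sequence $d_uw(F)$ cross once, on either side of the crossover the summand decays geometrically, so the whole sum is comparable to the crossover value, which equals $k^{r\kappa(\theta_0)}w(E)^{\theta_0}w(F)^{1-\theta_0}$ since $\gamma(\theta_0)=0$; the cases in which the crossover falls outside $[0,r]$ are handled by checking that one sequence then dominates throughout. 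This furnishes \eqref{eq:SuffCond} with $\beta=\alpha=\tfrac{p}{p+1-\delta}$ and no extraneous factor, and Theorem \ref{thm:Suff} then yields $\|Mf\|_{L^{p,\infty}(w)}\lesssim\|f\|_{L^{p}(w)}$. I expect this borderline crossover estimate, rather than the routine geometric summation, to be the main technical point, with the combinatorial sphere count as a secondary item to get right.
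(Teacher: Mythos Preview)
Your proposal is correct and follows essentially the same route as the paper: both reduce to proving the expander-type bound of Lemma~\ref{lem:CorSuffAux} via the level decomposition, the pair of one-sided block estimates $k^{(r-m)(p-\delta)+r\delta}w(E_j)$ and $k^{m}w(F_i)$, and an interpolation between them, then invoke Theorem~\ref{thm:Suff} with $\beta=\tfrac{p}{p-\delta+1}$ (and $\alpha=\tfrac{q}{p-\delta+1}$ for the strong type). The only difference is bookkeeping: you first collapse to a one-dimensional $u$-sum and then treat the borderline $\theta_0$ by a crossover argument (and separately use the geometric-mean bound for $\theta<\theta_0$), whereas the paper keeps the double sum, splits by a real threshold $i<j+\rho$ versus $i\geq j+\rho$, and optimizes $\rho$ by calculus, which yields the sharp $\theta_0$-inequality in one stroke and thereby covers both the weak and the strong conclusions at once.
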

It is worth observing that if for a certain $p>1$,  \eqref{eq:Suffweaker} holds, then it holds as well for every $q>p$. This fact will be crucial for us to settle the strong type estimates in Corollary \ref{cor:SuffAux}.

We would like observe that as in the case of the condition in Theorem \ref{thm:Suff}, the example in Proposition \ref{Prop:Neg2} also shows that \eqref{eq:Suffweaker}  in Corollary \ref{cor:SuffAux} is sharp.

From Theorem \ref{thm:Suff} and Corollary \ref{cor:SuffAux} we shall derive some Corollaries. In Corollary \ref{cor:kalpha} will provide weights for which the endpoint estimate does not hold but weak and strong type estimates do. In Corollary \ref{cor:A1Ap} we will show that the condition obtained in \cite{ORRS} also allows to provide strong type estimates. Observe that in that case even though that the strong type $(p,p)$ estimate follows by interpolation, that is not the case for the strong $(p',p')$ estimate in terms of the dual weight, and then Theorem \ref{thm:Suff} yields new estimates. We wonder whether if, as in the classical setting, (\ref{eq:Strongpp}) is equivalent to  (\ref{eq:StrongDual})  in this setting.

The remainder of the paper is organized as follows. 
Section \ref{sec:Examples} is devoted to provide the examples of weights announced in the introduction. In Section \ref{sec:Pathological}
we give precise statements and proofs relative to the $A_{p}$ condition in this setting. In Section \ref{sec:SuffCorolariesproof} we give detailed proofs of  Theorem
\ref{thm:Suff} and Corollary \ref{cor:SuffAux}. Finally, in Section \ref{sec:twoWeights} we provide our two weight results and a remark concerning the Sawyer testing condition.

\section{Examples}\label{sec:Examples}
This section is devoted to present the examples of weights that we announced in the introduction. The following Corollary shows that there exist weights beyond the class studied in \cite{ORRS} for which the weighted strong and weak type estimates hold.
\begin{cor}
\label{cor:kalpha}Let $p>1$. If
\[
w(x)=\sum_{j=0}^{\infty}k^{j(p-1)}\chi_{T_{j}}(x)
\]
we have that
\[
\|Mf\|_{L^{p,\infty}(w)}\lesssim\|f\|_{L^{p}(w)}
\]
and also that for every $q>p$
\[
\|Mf\|_{L^{q}(w)}\lesssim\|f\|_{L^{q}(w)}
\]
and 
\[
\|Mf\|_{L^{q}\left(\sigma_{q}\right)}\lesssim\|f\|_{L^{q}\left(\sigma_{q}\right)}.
\]
\end{cor}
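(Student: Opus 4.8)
The plan is to deduce all three estimates from Corollary~\ref{cor:SuffAux}, so it suffices to verify that the weight $w=\sum_{j\ge 0}k^{j(p-1)}\chi_{T_j}$ satisfies \eqref{eq:Suffweaker} for some $\delta<1$; I expect that $\delta=0$ works. Fix $x\in T_j$ and integers $i,r$ with $|i-j|\le r$, and count $|T_i\cap S(x,r)|$. A vertex $y\in T_i$ lies at distance $r$ from $x$ precisely when the geodesic from $x$ to $y$ rises $a$ steps from $x$ to the least common ancestor $z$ and then descends $b$ steps to $y$, where $a+b=r$ and $(j-a)+b=i$. These two equations force $a=\tfrac{r+j-i}{2}$ and $b=\tfrac{r+i-j}{2}$, so the ancestor $z$ is uniquely determined (when it exists, i.e.\ when $a\le j$), and every admissible $y$ is one of the descendants of $z$ at level $i$. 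Consequently
\[
|T_i\cap S(x,r)|\le k^{b}=k^{\frac{r+i-j}{2}},
\]
with the convention that the left-hand side is $0$ when $a>j$ or when $r+i-j$ is odd.

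Since $w$ is constant equal to $k^{i(p-1)}$ on $T_i$ and $w(x)=k^{j(p-1)}$, the previous bound gives
\[
w\big(T_i\cap S(x,r)\big)\le k^{\frac{r+i-j}{2}}\,k^{i(p-1)}.
\]
Taking $\delta=0$, the right-hand side of \eqref{eq:Suffweaker} equals $k^{\frac{r+i-j}{2}\,p}\,w(x)=k^{\frac{r+i-j}{2}p+j(p-1)}$, so the desired inequality is implied by
\[
\frac{r+i-j}{2}+i(p-1)\;\le\;\frac{r+i-j}{2}\,p+j(p-1),
\]
which, after cancelling and dividing by $p-1>0$, reduces to $i-j\le \tfrac{r+i-j}{2}$, that is $i-j\le r$. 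This holds by the hypothesis $|i-j|\le r$, so \eqref{eq:Suffweaker} is satisfied with $\delta=0<1$. Corollary~\ref{cor:SuffAux} then yields the weak type $(p,p)$ estimate together with the strong type $(q,q)$ estimate and its dual for every $q>p$.

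I expect the only delicate point to be the cardinality computation for $T_i\cap S(x,r)$: one must keep track of the parity constraint linking $r$ and $i-j$ and of the feasibility condition $a\le j$ (the geodesic cannot rise above the root), but in every admissible case the count is dominated by $k^{(r+i-j)/2}$, which is all that is needed. Everything else is bookkeeping of exponents, and no interpolation or further estimate is required, since Corollary~\ref{cor:SuffAux} already packages the passage from \eqref{eq:Suffweaker} to the three conclusions.
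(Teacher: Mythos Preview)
Your proof is correct and follows the same approach as the paper: both verify condition~\eqref{eq:Suffweaker} via the elementary count $|T_i\cap S(x,r)|\le k^{(r+i-j)/2}$ and then invoke Corollary~\ref{cor:SuffAux}. The only difference is the choice of parameter: the paper takes $\delta=1-p$, which makes the exponent identity exact, whereas you take $\delta=0$ and absorb the slack via the inequality $i-j\le r$; either choice is admissible since Corollary~\ref{cor:SuffAux} only requires some $\delta<1$.
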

\begin{proof}
It suffices to show that \eqref{eq:Suffweaker} holds. 
Let $x\in T_{j}$ and assume that $|i-j|\leq r$
and that $m\in\{0,\dots,r\}$ is the only integer such that $i=j+r-2m$.
Then
\[
\begin{split} & w(T_{i}\cap S(x,r))\\
 & =|T_{i}\cap S(x,r)|k^{i(p-1)}\leq k^{i(p-1)}k^{r-m}\\
 & =k^{(i-j)(p-1)}k^{r-m}k^{j(p-1)}=k^{(r-2m)(p-1)}k^{r-m}w(x)\\
 & =k^{(r-m)(2p-1)}k^{-p(r-m)}k^{-m(p-1)}k^{r-m}w(x)\\
 & =k^{(r-m)(2p-1)}k^{(1-p)r}w(x)
\end{split}
\]
Hence  \eqref{eq:Suffweaker} holds with $\delta=1-p$, since $\frac{i-j+r}{2}=r-m$.
\end{proof}
Our next corollary recovers the weighted estimates that could be derived from \cite{ORRS} via interpolation and furthermore we obtain the strong type estimates for the dual weight.

\begin{cor}
\label{cor:A1Ap}If there exists some $s>1$ such that 
\[
M_{s}w\lesssim w
\]
then we have that for every $p>1$
\[
\|Mf\|_{L^{p}(w)}\lesssim\|f\|_{L^p(w)}
\]
and also that 
\[
\|Mf\|_{L^{p'}(\sigma_{p})}\lesssim\|f\|_{L^{p'}(\sigma_{p})}.
\]
\end{cor}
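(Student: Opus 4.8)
The plan is to verify the level-wise hypothesis \eqref{eq:Suffweaker} of Corollary \ref{cor:SuffAux} and then simply invoke that corollary. The first step is to unwind the assumption $M_s w\lesssim w$. Since $M_s w=M(w^s)^{1/s}$, the bound $M_s w(x)\lesssim w(x)$ is the same as $M(w^s)(x)\lesssim w(x)^s$, and testing $M(w^s)$ against the ball $B(x,r)$, together with $|B(x,r)|\simeq k^r$, gives
\[
\int_{S(x,r)}w^s\le\int_{B(x,r)}w^s\lesssim k^{r}\,w(x)^{s}
\]
for every vertex $x$ and every $r$.

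The second step localizes this to a single level. For $x\in T_j$ and $i$ with $|i-j|\le r$ (if the parity is wrong then $T_i\cap S(x,r)=\emptyset$ and there is nothing to prove), the portion of the sphere on level $T_i$ satisfies $|T_i\cap S(x,r)|\simeq k^{\frac{r+i-j}{2}}$. Applying H\"older with exponents $s$ and $s'$ on $T_i\cap S(x,r)$ and inserting the previous display,
\[
w(T_i\cap S(x,r))\le|T_i\cap S(x,r)|^{\frac{1}{s'}}\Big(\int_{T_i\cap S(x,r)}w^s\Big)^{\frac1s}\lesssim k^{\frac{r+i-j}{2}\frac{1}{s'}}\,k^{\frac{r}{s}}\,w(x).
\]
This is exactly \eqref{eq:Suffweaker} with $\delta=\tfrac1s$: because $\frac{r+i-j}{2}\ge0$, for any exponent $q\ge1$ one has $q-\delta\ge1-\tfrac1s=\tfrac1{s'}$, so $k^{\frac{r+i-j}{2}\frac1{s'}}\le k^{\frac{r+i-j}{2}(q-\delta)}$, while the power of $k^r$ matches $\delta$. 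As $s>1$ we do have $\delta=\tfrac1s<1$, which is what Corollary \ref{cor:SuffAux} requires.

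For the final step, given a target $p>1$ I would fix some $p_0\in(1,p)$. The computation above shows \eqref{eq:Suffweaker} holds for the exponent $p_0$ with $\delta=\tfrac1s<1$, so Corollary \ref{cor:SuffAux} applies; taking $q=p>p_0$ in the conclusion of that corollary yields both $\|Mf\|_{L^{p}(w)}\lesssim\|f\|_{L^{p}(w)}$ and $\|Mf\|_{L^{p'}(\sigma_{p})}\lesssim\|f\|_{L^{p'}(\sigma_{p})}$, the two claimed estimates.

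The point requiring care, and the reason I would route through Corollary \ref{cor:SuffAux} rather than checking \eqref{eq:SuffCond} directly, is that a crude global estimate of $\1\otimes w$ produces the forbidden borderline value $\beta=1$; it is the level-by-level H\"older argument that extracts the genuine gain $\delta<1$. The complementary subtlety is the exponent bookkeeping: since Corollary \ref{cor:SuffAux} only delivers strong-type bounds strictly above the exponent at which \eqref{eq:Suffweaker} is checked, verifying \eqref{eq:Suffweaker} at a slightly smaller $p_0<p$ is precisely what upgrades the weak-type output to the strong-type estimates holding exactly at $p$ (and at its dual).
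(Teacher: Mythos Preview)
Your argument is correct, but it takes a different route from the paper's. The paper quotes directly from \cite{ORRS} the global expander inequality
\[
\1\otimes w\bigl(\{(x,y)\in E\times F:\ d(x,y)=r\}\bigr)\lesssim k^{r\frac{s'}{s'+1}}w(E)^{\frac{s'}{s'+1}}w(F)^{\frac{1}{s'+1}},
\]
and feeds it straight into Theorem~\ref{thm:Suff} with $\beta=\tfrac{s'}{s'+1}$ and $\alpha=\tfrac{s'}{s'+1}p$, so that $\beta<\alpha$ automatically for every $p>1$. You instead work level by level: H\"older on $T_i\cap S(x,r)$ combined with $\int_{B(x,r)}w^s\lesssim k^r w(x)^s$ yields \eqref{eq:Suffweaker} with $\delta=1/s$, after which Corollary~\ref{cor:SuffAux} (applied at an auxiliary $p_0<p$) gives the strong-type estimates at $p$ and at the dual. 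Your approach is more self-contained, since it re-derives the needed decay from the hypothesis $M_sw\lesssim w$ rather than importing the \cite{ORRS} inequality as a black box; the paper's approach is shorter given that citation and avoids the $p_0<p$ manoeuvre because Theorem~\ref{thm:Suff} delivers the strong type directly at $p$.
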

\begin{proof}
Besides the interpolation from the endpoint we may provide an alternative
argument relying upon Theorem \ref{thm:Suff}. Recall that in \cite{ORRS}
it was shown that for $1<s<\infty$
\[
\1\otimes w\left(\{(x,y)\in E\times F:\ d(x,y)=r\}\right)\lesssim k^{r\frac{s'}{s'+1}}w(E)^{\frac{s'}{s'+1}}w(F)^{\frac{1}{s'+1}}.
\]
Observe that then
\[
\1\otimes w\left(\{(x,y)\in E\times F:\ d(x,y)=r\}\right)\lesssim k^{r\frac{s'}{s'+1}}w(E)^{\frac{\frac{s'}{s'+1}p}{p}}w(F)^{1-\frac{\frac{s'}{s'+1}p}{p}}
\]
and consequently the result follows from Theorem \ref{thm:Suff} with 
\[
\beta=\frac{s'}{s'+1}\qquad\alpha=\frac{s'}{s'+1}p.
\]
\end{proof}

\section{\label{sec:Pathological}Differences with the classical theory}

As we mentioned in the introduction a natural question is the relationship
of the weighted boundedness of the maximal function on $k$-ary trees
with constant independent of $k$ and the $A_{p}$ condition presented in \eqref{eq:ApCond}.
 We are going to provide two results that are slightly stronger, since we are going to settle them with balls $B(x,r)$ replaced by spheres $S(x,r)$ which yields a more restrictive condition since for $h\geq0$
 \[\frac{1}{|S(x,r)|}\int_{S(x,r)}h\lesssim \frac{1}{|B(x,r)|}\int_{B(x,r)}h \]
 as it was shown in \cite[Proposition 2.1]{ORRS}.

Our first result shows that we can find weights for which the maximal
function is bounded for every $q>1$ but $w\not\in A_{p}$ for every
$p>$1.
\begin{prop}
\label{Prop:ThmNeg1}Let $\delta\in\left(\frac{1}{2},1\right)$ 
\[
w(x)=\sum_{j=0}^{\infty}\frac{1}{k^{\delta j}}\chi_{T_{j}}(x)
\]
then, for every $q>1$
\[
\|M\|_{L^{q}(w)}\lesssim\|f\|_{L^{q}(w)}
\]
but also for every $p>1$
\[
\sup_{x\in T,r>0}\frac{1}{|S(x,r)|}\int_{S(x,r)}w\left(\frac{1}{|S(x,r)|}\int_{S(x,r)}w^{-\frac{1}{p-1}}\right)^{p-1}=\infty.
\]
\end{prop}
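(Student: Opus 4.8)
The statement splits into two independent assertions: the strong type $(q,q)$ bounds for every $q>1$, and the failure of the sphere $A_p$ functional for every $p>1$. The common geometric input I would record first is that if $x\in T_j$ and $|i-j|\le r$ with $r-(i-j)$ even, then every $y\in T_i\cap S(x,r)$ is reached by climbing $m=\tfrac{r-(i-j)}{2}$ steps to a common ancestor and descending $r-m$ steps, so that
$$|T_i\cap S(x,r)|\simeq k^{\,r-m}=k^{\frac{r+i-j}{2}},\qquad |S(x,r)|\simeq k^{r},$$
with implicit constants independent of $x,r,k$. Since $w$ is constant on each level, $w(T_i\cap S(x,r))\simeq k^{\frac{r+i-j}{2}}k^{-\delta i}$ while $w(x)=k^{-\delta j}$.

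For the positive part the plan is to invoke Corollary \ref{cor:SuffAux}, verifying \eqref{eq:Suffweaker} with its free parameter taken equal to the weight's $\delta$ (admissible since $\delta<1$). Writing $s=\tfrac{r+i-j}{2}\in[0,r]$, so that $i-j=2s-r$, the ratio $w(T_i\cap S(x,r))/w(x)$ carries exponent $s(1-2\delta)+\delta r$, whereas the right-hand side of \eqref{eq:Suffweaker} carries exponent $s(p-\delta)+\delta r$. The required inequality thus collapses to $s(1-\delta-p)\le 0$, which holds for all $s\ge0$ because $p>1$ forces $\delta+p>1$. Hence \eqref{eq:Suffweaker} holds for every $p>1$, and Corollary \ref{cor:SuffAux} delivers the strong type $(q,q)$ estimate for each $q>p$; letting $p\downarrow 1$ covers all $q>1$.

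For the failure of the $A_p$ condition I would evaluate the two sphere averages at a vertex $x\in T_j$ chosen deep enough that $j\ge r$, ensuring that all levels $T_{j-r},\dots,T_{j+r}$ genuinely meet $S(x,r)$. Summing over $m\in\{0,\dots,r\}$, the $w$-average is $\simeq k^{-\delta j}k^{-\delta r}\sum_{m=0}^r k^{m(2\delta-1)}$; since $\delta>\tfrac12$ this geometric sum is dominated by its top term $m=r$ (the lone, heavily weighted ancestor at level $j-r$), giving $\tfrac{1}{|S(x,r)|}\int_{S(x,r)}w\simeq k^{-\delta j}k^{r(\delta-1)}$. With $\gamma=\tfrac{\delta}{p-1}$ the $\sigma_p$-average is $\simeq k^{\gamma j}k^{\gamma r}\sum_{m=0}^r k^{-m(1+2\gamma)}$, a convergent sum dominated by $m=0$ (the $k^r$ lightly weighted descendants), giving $\simeq k^{\gamma(j+r)}$. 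Because $\gamma(p-1)=\delta$, the $j$- and $p$-dependence cancels in the product and
$$\Big(\tfrac{1}{|S(x,r)|}\!\int_{S(x,r)} w\Big)\Big(\tfrac{1}{|S(x,r)|}\!\int_{S(x,r)} \sigma_p\Big)^{p-1}\simeq k^{r(\delta-1)}k^{\delta r}=k^{r(2\delta-1)},$$
which is uniform in $j$ (for $j\ge r$) and tends to infinity as $r\to\infty$ precisely because $\delta>\tfrac12$.

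The hard part is the $A_p$ computation. One must recognize that the obstruction is driven by the single ancestor carrying the largest weight set against the exponentially many descendants carrying the largest $\sigma_p$, and that the threshold $\delta>\tfrac12$ is exactly what forces the $w$-average's geometric sum to grow like its top term rather than remain bounded. Correspondingly one must take $x$ far from the root: the root gives $S(x,r)=T_r$ and a bounded functional, so the up-and-down geometry is indispensable. Finally, the identity $\gamma(p-1)=\delta$ is what makes the blow-up independent of $p$, so that the same spheres witness the failure of $A_p$ for every $p>1$.
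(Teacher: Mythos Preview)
Your argument is correct in both parts, but the route for the boundedness differs from the paper's. The paper appeals to Corollary~\ref{cor:A1Ap}: this weight is one of the examples from \cite{ORRS} satisfying $M_s w\lesssim w$ for some $s>1$, and the corollary then gives the strong type $(q,q)$ for every $q>1$ directly, without the limiting step $p\downarrow 1$. You instead verify the level condition~\eqref{eq:Suffweaker} of Corollary~\ref{cor:SuffAux} for every $p>1$ and then let $p\downarrow 1$; this is more self-contained (no reliance on \cite{ORRS}) and the reduction to the inequality $s(1-\delta-p)\le 0$ is clean.

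For the failure of the $A_p$ functional the approaches are essentially the same. The paper takes $x\in T_j$ with sphere of radius $r=j$ and uses crude lower bounds: $\tfrac{1}{|S(x,j)|}\int_{S(x,j)}w\gtrsim k^{-j}$ (ignoring all structure) and for the dual average it keeps only the contribution of $T_{2j}$, obtaining the product $\gtrsim k^{j(2\delta-1)}\to\infty$. You instead fix $r$, take $j\ge r$, and compute both averages sharply; your product $k^{r(2\delta-1)}$ is the same quantity with a different choice of the diverging parameter. Your observation that the root spheres give a bounded functional, and your explicit identification of the dominant levels (the single ancestor for $w$, the descendants for $\sigma_p$), are nice additions that the paper leaves implicit.
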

\begin{proof}
Note that the boundedness follows from Corollary \ref{cor:A1Ap}. Now we shall
show that $w$ does not satisfy the classical $A_{p}$ condition for
any $p>1$. Let us fix $p>1$. Let $x\in T_{j}$ and let us consider
$S(x,j)$. Then 
\[
\frac{1}{|S(x,j)|}\int_{S(x,j)}w\geq\frac{1}{|S(x,j)|}\int_{S(x,j)}w\gtrsim\frac{1}{k^{j}}.
\]
On the other hand 
\begin{align*}
\left(\frac{1}{|S(x,j)|}\int_{S(x,j)}w^{-\frac{1}{p-1}}\right)^{p-1} & \gtrsim\left(\frac{1}{k^{j}}\int_{B\cap T_{2j}}w^{-\frac{1}{p-1}}\right)^{p-1}\\
 & \geq\left(\frac{k^{2j\delta\frac{1}{p-1}}}{k^{j}}k^{j}\right)^{p-1}=k^{2j\delta}.
\end{align*}
Hence
\[
\frac{1}{|S(x,j)|}\int_{S(x,j)}w\frac{1}{|S(x,j)|}\int_{S(x,j)}w^{-1}\gtrsim k^{2j\delta}\frac{1}{k^{j}}=k^{j(2\delta-1)}
\]
and letting $j\rightarrow\infty$ the desired conclusion follows.
\end{proof}
Another fundamental difference between the classical theory and the
theory on $k$-ary trees is that there exist weights such that the
weighted weak type inequality holds but that is not the case for the
strong type.
\begin{prop}
\label{Prop:Neg2}Let $p>1$ and
\[
w(x)=\sum_{j=0}^{\infty}k^{(p-1)j}\chi_{T_{j}}(x).
\]
Then $\|M\|_{L^{p,\infty}(w)}\lesssim\|f\|_{L^{p}(w)}$ but 
\[
\|Mf\|_{L^{p}(w)}\lesssim\|f\|_{L^{p}(w)}
\]
does not hold.
\end{prop}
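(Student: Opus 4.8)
The weight appearing here is precisely the one treated in Corollary \ref{cor:kalpha}, so the weak type bound $\|Mf\|_{L^{p,\infty}(w)}\lesssim\|f\|_{L^{p}(w)}$ requires no new argument: it is exactly the conclusion of that Corollary applied to the present $p$. Thus the entire content of the Proposition is the failure of the strong type $(p,p)$ estimate, and this weight is meant to realize the borderline case $\beta=\alpha$ of Theorem \ref{thm:Suff}. The plan is therefore to exhibit a single, very simple test function whose maximal function already has infinite $L^{p}(w)$ norm.

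The natural candidate is $f=\chi_{\{o\}}$, where $o$ denotes the root, i.e.\ the unique vertex of $T_{0}$. Since $w(o)=k^{(p-1)\cdot 0}=1$ we have $\|f\|_{L^{p}(w)}=1$, so it suffices to show $\|Mf\|_{L^{p}(w)}=\infty$. First I would bound $Mf$ from below level by level: if $x\in T_{j}$ then $o$ lies at distance exactly $j$ from $x$, hence $o\in B(x,j)$, and using $|B(x,j)|\simeq k^{j}$ (the fact recorded just after the definition of $M$) one obtains
\[
Mf(x)\ge\frac{1}{|B(x,j)|}\int_{B(x,j)}f=\frac{1}{|B(x,j)|}\gtrsim k^{-j}.
\]

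With this pointwise lower bound I would then sum over the levels. Using $|T_{j}|=k^{j}$ and $w\equiv k^{(p-1)j}$ on $T_{j}$,
\[
\|Mf\|_{L^{p}(w)}^{p}=\sum_{j=0}^{\infty}\sum_{x\in T_{j}}\bigl(Mf(x)\bigr)^{p}w(x)\gtrsim\sum_{j=0}^{\infty}k^{j}\,k^{-jp}\,k^{(p-1)j}=\sum_{j=0}^{\infty}1=\infty.
\]
The three exponents cancel exactly on every level, which is both the mechanism behind the divergence and the reason this weight sits on the critical line of the sufficient condition. Since $\|f\|_{L^{p}(w)}$ is finite while $\|Mf\|_{L^{p}(w)}$ is infinite, the strong type $(p,p)$ inequality cannot hold, and the Proposition follows.

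The only step deserving a little care — and hence the mild obstacle — is the estimate $|B(x,j)|\simeq k^{j}$ for a vertex $x$ at level $j$, since here the ball reaches all the way up to the root; one checks that the vertices reached by going up toward $o$ and back down do not change the order of magnitude, because the outermost sphere $S(x,j)$ alone already carries $\simeq k^{j}$ vertices. In fact only the upper bound $|B(x,j)|\lesssim k^{j}$ is used above, so even this point is routine.
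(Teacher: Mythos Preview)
Your argument is correct and follows essentially the same route as the paper: both deduce the weak type bound from Corollary~\ref{cor:kalpha} and both disprove the strong type by testing on the characteristic function of a level, bounding $Mf$ from below on each deeper level to produce the divergent series $\sum 1$. The paper uses $f=\chi_{T_{j}}$ for a generic $j$, while you take the simplest instance $j=0$ (i.e.\ $f=\chi_{\{o\}}$), which is if anything a bit cleaner.
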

\begin{proof}
Note that the weak type $(p,p)$ estimate with respect to $w$ follows
from Corollary \ref{cor:kalpha}. Let $f=\chi_{T_{j}}$. Then we have that
\[
\|f\|_{L^{p}(w)}^{p}=\int_{T}|\chi_{T_{j}}|^{p}w=\int_{T_{j}}k^{(p-1)j}=k^{pj}.
\]
On the other hand 
\begin{align*}
\|Mf\|_{L^{p}(w)}^{p} & =\int_{T}(Mf)^{p}w\geq\sum_{i>j}\int_{T_{i}}(Mf)^{p}w\\
 & \geq\sum_{i>j}\int_{T_{i}}\frac{1}{k^{p(i-j)}}k^{i(p-1)}=\sum_{i>j}\frac{1}{k^{pi-pj}}k^{pi}\\
 & =k^{pj}\sum_{i>j}1
\end{align*}
and combining the estimates above
\[
k^{pj}\sum_{i>j}1\leq\|Mf\|_{L^{p}(w)}^{p}\lesssim\|f\|_{L^{2}(w)}^{p}=k^{pj}
\]
from what follows that 
\[
k^{pj}\sum_{i>j}\frac{1}{k^{i}}\lesssim k^{pj}\iff\sum_{i>j}1\lesssim1
\]
which is a contradiction.
\end{proof}

\section{Proof of Theorem  \ref{thm:Suff} and Corollary \ref{cor:SuffAux}}\label{sec:SuffCorolariesproof}

\subsection{Proof of Theorem \ref{thm:Suff}}

 We will follow the scheme devised in Naor and Tao \cite{NT} and further exploited in \cite{ORRS}.
We begin with the following Lemma.
\begin{lem}
\label{lem:sumLevels} Let $k\geq2$ be an integer. Let $p>1$ and
let $w$ be a weight. Assume that there exist $0<\beta<1$ and $\beta\leq\alpha<p$
such that 
\[
\1\otimes w\left(\{(x,y)\in E\times F:\ d(x,y)=r\}\right)\lesssim k^{r\beta}w(E)^{\frac{\alpha}{p}}w(F)^{1-\frac{\alpha}{p}}.
\]
Let $r>0$ and $\lambda>0$. Then 
\[
w\left(\left\{ A_{r}^{\circ}(|f|)\geq\lambda\right\} \right)\lesssim\sum_{\substack{n\in\N\cup\{0\}\\
1\leq2^{n}\leq2k^{r}
}
}\left(\frac{2^{n}}{k^{r}}\right)^{\frac{1-\beta}{2}\frac{p}{\alpha}}2^{\beta\frac{p}{\alpha}n}w\left(\left\{ |f|\geq2^{n-1}\lambda\right\} \right).
\]
\end{lem}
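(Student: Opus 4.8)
The plan is to bound the distribution of the spherical average $A_r^\circ(|f|)$ by decomposing the input $f$ dyadically according to its size, measured in terms of the sub-level/super-level sets of $|f|$, and then to apply the structural hypothesis \eqref{eq:SuffCond} to each piece. The key point is that $A_r^\circ(|f|)(x)$ at a fixed $x$ is an average of $|f|$ over the sphere $S(x,r)$, so the event $\{A_r^\circ(|f|)\ge\lambda\}$ forces a nontrivial fraction of the mass on $S(x,r)$ to come from large values of $|f|$; the dyadic scales $2^n$ with $1\le 2^n\le 2k^r$ appearing in the conclusion should correspond exactly to the range of possible relative masses, since $|S(x,r)|\simeq k^r$ means a single point carries relative weight $\simeq k^{-r}$ and the whole sphere carries relative weight $1$.

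**First I would** linearize and set up the level-set decomposition. Writing $f_n = |f|\,\chi_{\{2^{n-1}\lambda \le |f| < 2^n\lambda\}}$ (or the one-sided super-level sets $\{|f|\ge 2^{n-1}\lambda\}$), I would argue that if $A_r^\circ(|f|)(x)\ge\lambda$ then, by a pigeonholing over the dyadic scales, there is some $n$ for which the contribution of $f_n$ to the average $A_r^\circ$ at $x$ is at least a fixed fraction $\simeq \lambda/2^n$ of $\lambda$ (after summing the geometric tail). Concretely, $x$ must lie in a set where the portion of $S(x,r)$ on which $|f|\ge 2^{n-1}\lambda$ has relative measure $\gtrsim 2^n/k^r$ roughly speaking, i.e. $|S(x,r)\cap E_n| \gtrsim (2^n/k^r)\,|S(x,r)|$ with $E_n=\{|f|\ge 2^{n-1}\lambda\}$. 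This is where the upper cutoff $2^n\le 2k^r$ enters: one cannot demand more than the full sphere.

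**Next I would** translate each such event into the bilinear quantity $\1\otimes w(\{(x,y)\in F\times E_n : d(x,y)=r\})$, where $F$ is the level set $\{A_r^\circ(|f|)\ge\lambda\}$ (or the portion thereof coming from scale $n$). Using that $\1\otimes w(\{(x,y)\in F\times E_n:d(x,y)=r\}) = \int_F A_r^\circ(\chi_{E_n}w)\,|S(x,r)|$ together with the lower bound $|S(x,r)\cap E_n|\gtrsim (2^n/k^r)|S(x,r)|$, I would obtain a lower bound for this bilinear quantity of the form $\gtrsim (2^n/k^r)\,k^r\,w(F) = 2^n\,w(F)$, matching the counting-measure weight $\1$ on the $F$-side. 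Comparing this with the upper bound supplied by hypothesis \eqref{eq:SuffCond}, namely $\lesssim k^{r\beta}w(F)^{\alpha/p}w(E_n)^{1-\alpha/p}$, and solving for $w(F)$ should produce the factor $(2^n/k^r)^{\frac{1-\beta}{2}\frac{p}{\alpha}}2^{\beta\frac p\alpha n}w(E_n)$ after isolating $w(F)^{1-\alpha/p}$ and raising to the appropriate power $\frac{p}{p-\alpha}$ or $\frac{p}{\alpha}$; summing over the admissible $n$ gives the stated estimate. The bookkeeping of exponents, reconciling $\frac{1-\beta}{2}$ and $\beta$ with the powers of $2^n$ and $k^r$, is the delicate arithmetic here, and I would expect the $\frac12$ and the precise placement of $\frac{p}{\alpha}$ to arise from optimizing or from a careful two-sided mass count on the sphere.

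**The hard part will be** the geometric mass-counting step and the correct extraction of the exponent $\frac{1-\beta}{2}$. The measure on the tree is non-doubling and the spheres $S(x,r)$ overlap in a complicated way across different centers $x$, so the passage from ``$x\in F$'' to a clean lower bound for the bilinear sum must carefully exploit that each sphere has size $\simeq k^r$ and cannot double-count mass; this is precisely the Naor--Tao style combinatorial input. I anticipate that the factor $\left(\frac{2^n}{k^r}\right)^{\frac{1-\beta}{2}\frac{p}{\alpha}}$ does not come from a single comparison but from balancing the trivial bound (relative mass $\le 1$) against the hypothesis, and getting the exponent $\frac{1-\beta}{2}$ rather than $1-\beta$ will require splitting the deficit symmetrically between the $E$ and $F$ roles or using the hypothesis in a self-improving way; this is the step where I would be most careful and where the specific form of \eqref{eq:SuffCond} with its product structure $w(E)^{\alpha/p}w(F)^{1-\alpha/p}$ is essential.
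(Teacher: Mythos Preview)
Your overall architecture---dyadic level-set decomposition of $|f|$, a pigeonhole step to localize the scale $n$, and then a bilinear comparison using the hypothesis with $E=E_n$ and $F=F_n$---is exactly the paper's. But two concrete steps are wrong or missing, and the mechanism you conjecture for the exponent $\tfrac{1-\beta}{2}$ is not the one that works.

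\medskip
\textbf{The pigeonhole threshold.} You assert that for some $n$ one has $A_r^\circ(\chi_{E_n})(x)\gtrsim 2^n/k^r$. A pigeonhole of the form ``if $\sum_n 2^n A_r^\circ(\chi_{E_n})\ge \tfrac12$ then $A_r^\circ(\chi_{E_n})\ge\theta_n$ for some $n$'' requires $\sum_n 2^n\theta_n<\tfrac12$; with $\theta_n=2^n/k^r$ this sum is $\sum_{2^n\le k^r}4^n/k^r\simeq k^r$, so the pigeonhole fails. What the paper does is introduce a \emph{free parameter} $\gamma\in(0,1)$ and take $\theta_n\simeq c_\gamma\,2^{-n}(2^n/k^r)^\gamma$; then $\sum_n 2^n\theta_n=c_\gamma\sum_n(2^n/k^r)^\gamma$ is a convergent geometric series. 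After the bilinear step and solving for $w(F_n)$ one obtains
\[
w(F_n)\lesssim\bigl(k^{r(\beta-1+\gamma)}2^{n(1-\gamma)}\bigr)^{p/\alpha}w(E_n),
\]
and the choice $\gamma=\tfrac{1-\beta}{2}$ makes this exactly $(2^n/k^r)^{\frac{1-\beta}{2}\frac{p}{\alpha}}2^{\beta\frac{p}{\alpha}n}w(E_n)$. So the $\tfrac{1-\beta}{2}$ comes from a one-parameter optimization in the pigeonhole weights, not from ``splitting the deficit between $E$ and $F$'' or any self-improvement of the hypothesis.

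\medskip
\textbf{Orientation and the tail.} In the bilinear step you must apply the hypothesis with the input level set $E_n$ receiving the exponent $\alpha/p$ and the output set $F_n$ receiving $1-\alpha/p$ (you have them reversed); otherwise solving produces $p/(p-\alpha)$ instead of $p/\alpha$. Also, the contribution of $\{|f|\ge \tfrac12 k^r\lambda\}$ is not covered by the pigeonhole and needs a separate, simpler estimate: apply the hypothesis with $E=\{y\}$ and $F=S(y,r)$ to get $w(S(y,r))\le k^{r\beta p/\alpha}w(y)$, then sum over $y$ in that set; this produces the top term $n\simeq\log_2 k^r$ in the final sum.
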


\begin{proof}
We can assume without loss of generality $f$ to be non-negative.
We bound 
\begin{equation}
|f|\leq\frac{1}{2}+\sum_{\substack{n\in\N\cup\{0\}\\
1\leq2^{n}\leq k^{r}
}
}2^{n}\chi_{E_{n}}+|f|\chi_{\{|f|\geq\frac{1}{2}k^{r}\}},\label{eq:troceado}
\end{equation}
where $E_{n}$ is the sublevel set 
\begin{equation}
E_{n}=\left\{ 2^{n-1}\leq|f|<2^{n}\right\} .\label{eq:niveles}
\end{equation}
Hence 
\begin{equation}
A_{r}^{\circ}(|f|)\leq\frac{1}{2}+\sum_{\substack{n\in\N\cup\{0\}\\
1\leq2^{n}\leq k^{r}
}
}2^{n}A_{r}^{\circ}\left(\chi_{E_{n}}\right)+A_{r}^{\circ}\left(f\chi_{\{f\geq\frac{1}{2}k^{r}\}}\right).\label{eq:troceadopromediado}
\end{equation}
First we observe that
\begin{equation}
w\left(A_{r}^{\circ}\left(f\chi_{\{f\geq\frac{1}{2}k^{r}\}}\right)\neq0\right)  \le w\left(\bigcup_{y\in\{f\geq\frac{1}{2}k^{r}\}}S(y,r)\right)\leq\sum_{y\in\{f\geq\frac{1}{2}k^{r}\}}w(S(y,r)).
\label{eq:TerminoFacil}
\end{equation}
Observe that choosing
\[
E=\{y\}\qquad F=S(y,s)
\]
in the hypothesis
\[
w(S(y,s))\leq k^{\frac{\beta}{\alpha}ps}w(y)
\]
and consequently
\begin{align*}
w\left(A_{r}^{\circ}\left(f\chi_{\{f\geq\frac{1}{2}k^{r}\}}\right)\neq0\right) & \leq\sum_{y\in\{f\geq\frac{1}{2}k^{r}\}}w(S(y,r))\\
 & \leq k^{\frac{\beta}{\alpha}pr}\sum_{y\in\{f\geq\frac{1}{2}k^{r}\}}w(y)\\
 & =k^{\frac{\beta}{\alpha}pr}w\left(\{f\geq\frac{1}{2}k^{r}\}\right).
\end{align*}
Thus we have that combining the estimates above
\[
\begin{split} & w\left(A_{r}^{\circ}f\geq1\right)\\
 & =w\left(\sum_{\substack{n\in\N\cup\{0\}\\
1\leq2^{n}\leq k^{r}
}
}2^{n}A_{r}^{\circ}\left(\chi_{E_{n}}\right)\geq\frac{1}{2}\right)+w\left(A_{r}^{\circ}\left(f\chi_{\{f\geq\frac{1}{2}k^{r}\}}\right)\neq0\right)\\
 & \leq w\left(\sum_{\substack{n\in\N\cup\{0\}\\
1\leq2^{n}\leq k^{r}
}
}2^{n}A_{r}^{\circ}\left(\chi_{E_{n}}\right)\geq\frac{1}{2}\right)+k^{\frac{\beta}{\alpha}pr}w\left(\{f\geq\frac{1}{2}k^{r}\}\right).
\end{split}
\]
Let $\gamma>0$ to be chosen. Note that if 
\[
\sum_{\substack{n\in\N\cup\{0\}\\
1\leq2^{n}\leq k^{r}
}
}2^{n}A_{r}^{\circ}\left(\chi_{E_{n}}\right)\geq\frac{1}{2}
\]
then we necessarily have for some $n\in\N$, such that $1\leq2^{n}\leq k^{r}$,
\[
A_{r}^{\circ}\left(\chi_{E_{n}}\right)\geq\frac{1}{2^{n+4}}\left(\frac{2^{n}}{k^{r}}\right)^{\gamma}8\left(2^{\gamma}-1\right).
\]
Indeed, otherwise we have that
\[
\begin{split}\frac{1}{2} & \leq8\left(2^{\gamma}-1\right)\sum_{\substack{n\in\N\cup\{0\}\\
1\leq2^{n}\leq k^{r}
}
}2^{n}A_{r}^{\circ}\left(\chi_{E_{n}}\right)\le\frac{8\left(2^{\gamma}-1\right)}{16k^{r\gamma}}\sum_{\substack{n\in\N\cup\{0\}\\
1\leq2^{n}\leq k^{r}
}
}2^{\gamma n}\\
 & \leq8\left(2^{\gamma}-1\right)\frac{2^{\gamma\log_{2}k^{r}}-1}{16k^{r\gamma}\left(2^{\beta}-1\right)}\leq8\left(2^{\beta}-1\right)\frac{k^{r\gamma}-1}{16k^{r\gamma}\left(2^{\gamma}-1\right)}\\
 & <\frac{8\left(2^{\gamma}-1\right)}{16\left(2^{\gamma}-1\right)}=\frac{1}{2}
\end{split}
\]
which is a contraction. Thus 
\[
w\left(A_{r}^{\circ}f\geq2\right)\leq\sum_{\substack{n\in\N\cup\{0\}\\
1\leq2^{n}\leq k^{r}
}
}w(F_{n})+k^{r}Mw\left(f\geq1\right)
\]
where 
\[
F_{n}=\left\{ A_{r}^{\circ}\left(\chi_{E_{n}}\right)\geq\frac{1}{2^{n+4}}\left(\frac{2^{n}}{k^{r}}\right)^{\gamma}8\left(2^{\gamma}-1\right)\right\} .
\]
Note that $F_{n}$ is finite and observe that since $A_{r}^{\circ}$
is a selfadjoint operator,
\[
\begin{split} & \frac{1}{k^{r}}\1\otimes w\left(\{(x,y)\in E_{n}\times F_{n}:d(x,y)=r\}\right)\\
 & =\frac{1}{k^{r}}\sum_{x\in E_{n}}\sum_{\stackrel{y\in F_{n}}{d(x,y)=r}}w(y)\simeq\int_{T}\chi_{E_{n}}A_{r}^{\circ}(w\chi_{F_{n}})(y)=\int_{F_{n}}wA_{r}^{\circ}(\chi_{E_{n}})(y)\\
 & \geq w(F_{n})\frac{1}{2^{n+4}}\left(\frac{2^{n}}{k^{r}}\right)^{\gamma}.
\end{split}
\]
Now, using the hypothesis
\[
\frac{1}{k^{r}}\1\otimes w\left(\{(x,y)\in E_{n}\times F_{n}:d(x,y)=r\}\right)\lesssim k^{r(\beta-1)}w(E_{n})^{\frac{\alpha}{p}}w(F_{n})^{1-\frac{\alpha}{p}}.
\]
Hence
\[
\begin{split} & w(F_{n})\frac{1}{2^{n+4}}\left(\frac{2^{n}}{k^{r}}\right)^{\gamma}\lesssim\frac{1}{8\left(2^{\gamma}-1\right)}k^{r(1-\beta)}w(E_{n})^{\frac{\alpha}{p}}w(F_{n})^{1-\frac{\alpha}{p}}\\
\iff & w(F_{n})^{\frac{\alpha}{p}}\lesssim\frac{1}{2^{\gamma}-1}k^{-r(1-\beta-\gamma)}2^{n-\gamma n}w(E_{n})^{\frac{\alpha}{p}}\\
\iff & w(F_{n})\lesssim\left(\frac{1}{2^{\gamma}-1}\right)^{\frac{p}{\alpha}}k^{-r(1-\beta-\gamma)\frac{p}{\alpha}}2^{(1-\gamma)n\frac{p}{\alpha}}w(E_{n})
\end{split}
\]
Choosing $\gamma=1-t\beta>0$ with $t>1$ we have that
\[
1-\beta-\gamma  =1-\beta-(1-t\beta)=1-\beta-1+t\beta=\beta(t-1)
\]
and 
\[
1-\gamma =1-(1-t\beta)=\beta t=\beta(t-1)+\beta.
\]
Hence
\[
w(F_{n})\lesssim\left(\frac{1}{2^{1-t\beta}-1}\right)^{\frac{p}{\alpha}}\left(\frac{2^{n}}{k^{r}}\right)^{(t-1)\beta\frac{p}{\alpha}}2^{\beta\frac{p}{\alpha}n}w(E_{n})
\]
and we may choose $t=\frac{1+\frac{1}{\beta}}{2}$. Then 
\begin{align*}
w(F_{n}) & \lesssim\left(\frac{2^{n}}{k^{r}}\right)^{\frac{1-\beta}{2\beta}\beta\frac{p}{\alpha}}2^{\beta\frac{p}{\alpha}n}w(E_{n})\\
 & =\left(\frac{2^{n}}{k^{r}}\right)^{\frac{1-\beta}{2}\frac{p}{\alpha}}2^{\beta\frac{p}{\alpha}n}w(E_{n})
\end{align*}
and this yields the desired conclusion.
\end{proof}
Combining the ingredients above we are in the position to settle Theorem
\ref{thm:Suff}. 
\begin{proof}[Proof of Theorem \ref{thm:Suff}]
We begin settling (\ref{eq:Weakbpalpha}). Since as we noted in the
introduction $M^{\circ}f\simeq Mf$ it suffices to settle the result
for $M^{\circ}f$. Since $M^{\circ}f=\sup_{r\geq0}A_{r}^{\circ}f$,
Lemma \ref{lem:sumLevels} implies that 
\begin{align*}
w\left(M^{\circ}f\geq\lambda\right) & \leq\sum_{r=0}^{\infty}w\left(A_{r}^{\circ}f\geq\lambda\right)\\
 & \lesssim\sum_{r=0}^{\infty}\sum_{\substack{n\in\N\cup\{0\}\\
1\leq2^{n}\leq2k^{r}
}
}\left(\frac{2^{n}}{k^{r}}\right)^{\frac{1-\beta}{2}\frac{p}{\alpha}}2^{\beta\frac{p}{\alpha}n}w\left(\left\{ |f|\geq2^{n-1}\lambda\right\} \right)\\
 & \lesssim\sum_{x\in T}\sum_{n=0}^{\infty}\left(\sum_{\substack{r\in\N\cup\{0\}\\
k^{r}\geq2^{n-1}
}
}\frac{1}{k^{r\frac{1-\beta}{2}\frac{p}{\alpha}}}\right)2^{\frac{1-\beta}{2}\frac{p}{\alpha}+\beta\frac{p}{\alpha}n}\chi_{\{|f(x)|\geq2^{n-1}\lambda\}}(x)w(x)\\
 & \lesssim c_{s}\sum_{x\in T}\sum_{n=0}^{\infty}2^{\beta\frac{p}{\alpha}n}\chi_{\{|f(x)|\geq2^{n-1}\lambda\}}w(x)\lesssim c_{s}\sum_{x\in T}\frac{1}{\lambda^{\beta\frac{p}{\alpha}}}|f(x)|^{\beta\frac{p}{\alpha}}w(x).
\end{align*}
Now we turn our attention to the case $\alpha>\beta$. We claim that
\begin{equation}
\sum_{r=0}^{\infty}\|A_{r}^{\circ}f\|_{L^{p}(w)}\lesssim\|f\|_{L^{p}(w)}.\label{eq:SumArBounded}
\end{equation}
Note that from this estimate we can derive both \eqref{eq:Strongpp} and \eqref{eq:StrongDual}.
In the case of \eqref{eq:Strongpp} we have that
\[
\|Mf\|_{L^{p}(w)}\leq\sum_{r=0}^{\infty}\|A_{r}^{\circ}f\|_{L^{p}(w)}\lesssim\|f\|_{L^{p}(w)}.
\]
For \eqref{eq:StrongDual}, we may assume that $f\geq0$. Note that
since
\[
\|Mf\|_{L^{p'}(\sigma)}=\sup_{\|g\|_{L^{p}(w)}=1}\left|\int_{T}Mfg\right|
\]
we can argue as follows 
\begin{align*}
\left|\int_{T}Mfg\right| & \leq\int_{T}Mf|g|\leq\sum_{r=0}^{\infty}\int_{T}A_{r}^{\circ}(f)|g|=\sum_{r=0}^{\infty}\int_{T}fA_{r}^{\circ}(g)\\
 & \leq\|f\|_{L^{p'}(\sigma)}\sum_{r=0}^{\infty}\|A_{r}^{\circ}(g)\|_{L^{p}(w)}\\
 & \lesssim\|f\|_{L^{p'}(\sigma)}\|g\|_{L^{p}(w)}
\end{align*}
and then we are done. Hence we are left with settling \eqref{eq:SumArBounded}.
Note that
\begin{align*}
\|A_{r}^{\circ}f\|_{L^{p}(w)}^{p} & =p\int_{0}^{\infty}\lambda^{p-1}w\left(A_{r}^{\circ}f\geq\lambda\right)d\lambda\\
 & \leq p\int_{0}^{\infty}\lambda^{p-1}w\left(\left\{ |f|\geq2^{n-1}\lambda\right\} \right)d\lambda\\
 & =\sum_{\substack{n\in\N\cup\{0\}\\
1\leq2^{n}\leq2k^{r}
}
}\left(\frac{2^{n}}{k^{r}}\right)^{\frac{1-\beta}{2}\frac{p}{\alpha}}2^{\beta\frac{p}{\alpha}n}p\int_{0}^{\infty}\lambda^{p-1}w\left(\left\{ |f|\geq2^{n-1}\lambda\right\} \right)d\lambda\\
 & =\sum_{\substack{n\in\N\cup\{0\}\\
1\leq2^{n}\leq2k^{r}
}
}\left(\frac{2^{n}}{k^{r}}\right)^{\frac{1-\beta}{2}\frac{p}{\alpha}}2^{\beta\frac{p}{\alpha}n}p\int_{0}^{\infty}\left(\frac{s}{2^{n-1}}\right)^{p-1}w\left(\left\{ |f|\geq s\right\} \right)\frac{1}{2^{n-1}}ds\\
 & =\sum_{\substack{n\in\N\cup\{0\}\\
1\leq2^{n}\leq2k^{r}
}
}\left(\frac{2^{n}}{k^{r}}\right)^{\frac{1-\beta}{2}\frac{p}{\alpha}}2^{\beta\frac{p}{\alpha}n}p\left(\frac{1}{2^{n-1}}\right)^{p}\int_{0}^{\infty}s^{p-1}w\left(\left\{ |f|\geq s\right\} \right)ds\\
 & =\sum_{\substack{n\in\N\cup\{0\}\\
1\leq2^{n}\leq2k^{r}
}
}\left(\frac{2^{n}}{k^{r}}\right)^{\frac{1-\beta}{2}\frac{p}{\alpha}}2^{\beta\frac{p}{\alpha}n}\left(\frac{1}{2^{n-1}}\right)^{p}\|f\|_{L^{p}(w)}^{p}.
\end{align*}
Hence
\[
\|A_{r}^{\circ}f\|_{L^{p}(w)}\leq\left(\sum_{\substack{n\in\N\cup\{0\}\\
1\leq2^{n}\leq2k^{r}
}
}\left(\frac{2^{n}}{k^{r}}\right)^{\frac{1-\beta}{2}\frac{p}{\alpha}}2^{\frac{\beta}{\alpha}np}\left(\frac{1}{2^{n-1}}\right)^{p}\right)^{\frac{1}{p}}\|f\|_{L^{p}(w)}.
\]
Arguing essentially as above we have that
\begin{align*}
\sum_{r=0}^{\infty}\left(\sum_{\substack{n\in\N\cup\{0\}\\
1\leq2^{n}\leq2k^{r}
}
}\left(\frac{2^{n}}{k^{r}}\right)^{\frac{1-\beta}{2}\frac{p}{\alpha}}2^{\frac{\beta}{\alpha}np}\left(\frac{1}{2^{n-1}}\right)^{p}\right)^{\frac{1}{p}} & \leq\sum_{r=0}^{\infty}\sum_{\substack{n\in\N\cup\{0\}\\
1\leq2^{n}\leq2k^{r}
}
}\left(\frac{2^{n}}{k^{r}}\right)^{\frac{1-\beta}{2}\frac{1}{\alpha}}2^{(\frac{\beta}{\alpha}-1)n}\\
 & \lesssim\sum_{n=0}^{\infty}\sum_{\substack{r\in\N\cup\{0\}\\
k^{r}\geq2^{n-1}
}
}\frac{1}{k^{r\frac{1-\beta}{2}\frac{1}{\alpha}}}2^{n}{}^{\frac{1-\beta}{2}\frac{1}{\alpha}}2^{(\frac{\beta}{\alpha}-1)n}\\
 & \lesssim\sum_{n=0}^{\infty}2^{(\frac{\beta}{\alpha}-1)n}<\infty.
\end{align*}
This ends the proof of the Theorem.
\end{proof}
\subsection{Proof of Corollary \ref{cor:SuffAux}}
We begin settling the following Lemma.
\begin{lem}\label{lem:CorSuffAux} Under the conditions of Corollary \ref{cor:SuffAux}
we have that for every pair $E,F$ of finite subsets of $T$ and every
non negative integer $r$,
\[
\1\otimes w\left(\{(x,y)\in E\times F:\ d(x,y)=r\}\right)\le c_{p,\delta}k^{\frac{p}{p-\delta+1}r}w(F)^{1-\frac{1}{p-\delta+1}}w(E)^{\frac{1}{p-\delta+1}}.
\]
\end{lem}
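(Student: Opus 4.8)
I want to prove Lemma \ref{lem:CorSuffAux}: starting from the level-wise hypothesis \eqref{eq:Suffweaker}, I need to produce the product-measure estimate of the form appearing in \eqref{eq:SuffCond}, with the specific exponents $\beta = \tfrac{p}{p-\delta+1}$ coming from the structure. Let me think carefully about what the numbers have to be and how the decomposition by levels drives everything.

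Let me set up the combinatorics. For fixed $r$, I decompose the left side over levels. The product measure counts pairs $(x,y)$ with $x \in E$, $y \in F$, $d(x,y) = r$, weighted by $\mathbb{1}\otimes w$, i.e. $|A|\,w(B)$-style. Writing $E_j = E \cap T_j$ and $F_i = F \cap T_i$, I get
$$\mathbb{1}\otimes w\big(\{(x,y): d(x,y)=r\}\big) = \sum_{x\in E}\sum_{\substack{y\in F\\ d(x,y)=r}} w(y) = \sum_{j}\sum_{x\in E_j} w\big(F \cap S(x,r)\big).$$
For $x \in T_j$, the sphere $S(x,r)$ meets $T_i$ only when $|i-j|\le r$ and $r+i-j$ is even, so I split $w(F\cap S(x,r)) = \sum_i w(F_i \cap S(x,r)) \le \sum_i w(T_i \cap S(x,r))$ and invoke the hypothesis \eqref{eq:Suffweaker}, giving $w(T_i\cap S(x,r)) \lesssim k^{\frac{r+i-j}{2}(p-\delta)} k^{r\delta} w(x)$.

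**The key step.** The plan is to feed this into the sum and then optimize. After substitution I have something like
$$\mathbb{1}\otimes w(\cdots) \lesssim \sum_j \sum_{x\in E_j} w(x) \sum_{i:\,|i-j|\le r} k^{\frac{r+i-j}{2}(p-\delta)} k^{r\delta}.$$
This is still not in the required bilinear form in $w(E)$ and $w(F)$ — the right side only sees $w(E)$. The honest route, matching how Corollary \ref{cor:A1Ax} and the structure of Theorem \ref{thm:Suff} work, is to use the self-adjointness of $A_r^\circ$ and split the estimate \emph{symmetrically}: I should bound the bilinear pairing $\sum_{x\in E}\sum_{y\in F, d(x,y)=r} w(y)$ two ways (once controlling by $w(E)$ via the hypothesis applied centered at $x\in E$, once by $w(F)$ applied centered at $y\in F$), and then interpolate the two one-sided bounds by a Hölder/geometric-mean argument to extract the exponent $1-\tfrac{1}{p-\delta+1}$ on $w(F)$ and $\tfrac{1}{p-\delta+1}$ on $w(E)$. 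Concretely, I would first establish a clean one-variable bound
$$\mathbb{1}\otimes w\big(\{(x,y)\in E\times F: d(x,y)=r\}\big) \lesssim k^{rp}\,w(E),$$
by summing the geometric series in $i$ (the largest term, at $i-j=r$, contributes $k^{r(p-\delta)}k^{r\delta}=k^{rp}$ and dominates since $p-\delta>0$), and symmetrically obtain a bound in terms of $w(F)$ by the dual use of self-adjointness.

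**Extracting the exponents and the main obstacle.** Given the two one-sided inequalities, a bound $\le A$ and a bound $\le B k^{cr} w(E)/w(F)$-type relation, I can raise them to complementary powers $\theta$ and $1-\theta$ and multiply, choosing $\theta$ so that the $k^r$ exponent collapses to $\tfrac{p}{p-\delta+1}$ and the weight exponents become $1-\tfrac{1}{p-\delta+1}$ and $\tfrac{1}{p-\delta+1}$; solving $\tfrac{1}{p-\delta+1}$ as the mixing parameter is exactly what makes the arithmetic close. I expect the genuine difficulty to lie in the bookkeeping of the $k^r$ exponent through this interpolation: the naive one-sided bound carries $k^{rp}$, which is too large, and only after balancing against the $k^{r\delta}$-type gain from the \emph{other} slot (the factor $k^{r\delta}$ and the parity/level geometry) does the exponent descend to $\tfrac{p}{p-\delta+1}<p$. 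Keeping the parity constraint $i\equiv j+r \pmod 2$ and the ranges $|i-j|\le r$ consistent on both sides, and verifying that the dominant terms in the two geometric sums sit at opposite extremes $i-j=\pm r$ so that their geometric mean yields the correct reduced power of $k^r$, is the crux. Once that balancing is pinned down, summing the convergent geometric series and collecting constants into $c_{p,\delta}$ is routine, and the lemma follows; it then feeds directly into Theorem \ref{thm:Suff} with $\beta=\tfrac{p}{p-\delta+1}$.
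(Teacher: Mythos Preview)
Your plan has a genuine gap: the geometric-mean interpolation of two \emph{global} one-sided bounds cannot produce the required exponent on $k^{r}$. Concretely, summing the hypothesis over all admissible $i$ gives the one-sided bound $\mathbb{1}\otimes w(\cdots)\lesssim k^{rp}w(E)$ (the $i-j=r$ term dominates), while the trivial counting bound from the $F$ side gives $\mathbb{1}\otimes w(\cdots)\lesssim k^{r}w(F)$ (each $y\in F_i$ has at most $k^{m}$ partners in $T_j$, and $m=r$ dominates). Taking the geometric mean with weight $\theta=\tfrac{1}{p-\delta+1}$ yields
\[
k^{r(\theta p + (1-\theta))}w(E)^{\theta}w(F)^{1-\theta}
= k^{r\frac{2p-\delta}{p-\delta+1}}w(E)^{\frac{1}{p-\delta+1}}w(F)^{1-\frac{1}{p-\delta+1}},
\]
and since $p>1>\delta$ one has $\tfrac{2p-\delta}{p-\delta+1}>1$. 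So the resulting $\beta$ exceeds $1$ and is useless for feeding into Theorem~\ref{thm:Suff}. The vague remark that ``dominant terms sit at opposite extremes $i-j=\pm r$'' does not fix this: once you sum first and interpolate afterwards, the cancellation you need is already lost. Also, your invocation of self-adjointness to get the $w(F)$-bound is off target here: the measure $\mathbb{1}\otimes w$ is asymmetric in $E$ and $F$, and the correct second bound comes from the elementary counting fact that a fixed $y\in T_i$ has at most $k^{m}$ vertices in $T_j$ at distance $r$, not from operator symmetry.

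The paper's argument avoids this trap by working \emph{level by level} and taking a minimum rather than a geometric mean. For each pair $(i,j)$ with $i=j+r-2m$ one has simultaneously
\[
\mathbb{1}\otimes w\big(\{(x,y)\in E_j\times F_i:d(x,y)=r\}\big)\lesssim \min\big\{k^{(r-m)(p-\delta)}k^{r\delta}w(E_j),\ k^{m}w(F_i)\big\}.
\]
One then introduces a real threshold parameter $\rho$ and, after the change of variables $c_j=w(E_j)k^{-(p-\delta)j}$, $d_i=w(F_i)k^{-i}$, uses the first bound when $i<j+\rho$ and the second when $i\ge j+\rho$. Each branch sums to a geometric series, giving an expression of the form $k^{\frac{p+\delta}{2}r}k^{\frac{\rho(p-\delta)}{2}}w(E)+k^{\frac{r}{2}}k^{-\frac{\rho}{2}}w(F)$, and optimizing over $\rho$ produces exactly $k^{\frac{p}{p-\delta+1}r}w(E)^{\frac{1}{p-\delta+1}}w(F)^{1-\frac{1}{p-\delta+1}}$. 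The point is that the $\min$ is taken \emph{before} summing over levels, which is strictly stronger than any interpolation applied after summing; this is the step your outline is missing.
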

\begin{proof}
We recall that we can split the tree $T$ as $T=\bigcup_{j=0}^{\infty}T_{j}$, where $T_{j}$
is the generation of the tree at depth $j$. We define $E_{j}=E\cap T_{j}$ and $F_{j}=F\cap T_{j}$.
An element in $E_{j}$ and an element in $F_{i}$ can be at distance
exactly $r$, if and only if $i=j+r-2m$ for some $m\in\{0,\ldots,r\}$.
Hence we can write 
\begin{equation}
\begin{split} & \1\otimes w\left(\{(x,y)\in E\times F:d(x,y)=r\}\right)\\
 & =\sum_{m=0}^{r}\sum_{\substack{i,j\in\N\cup\{0\}\\
i=j+r-2m
}
}\1\otimes w\left(\{(x,y)\in E_{j}\times F_{i}:d(x,y)=r\}\right).
\end{split}
\label{eq:levels}
\end{equation}
Now we fix $m\in\{0,\ldots,r\}$ and $i,j\in\N\cup\{0\}$ such that
$i=j+r-2m$. Note that if $x\in T_{j}$ and $y\in T_{i}$ are at distance
$r$ in $T$, then the $m^{th}$ parent of $x$ coincides with the
$(r-m)^{th}$ parent of $y$. This leads to the fact that for each
$y\in T_{i}$ there exist at most $k^{m}$ elements of $x\in T_{j}$
with $d(x,y)=r$. From this it readily follows that
\[
\1\otimes w\left(\{(x,y)\in E_{j}\times F_{i}:d(x,y)=r\}\right)\leq k^{m}w(F_{i}).
\]
On the other hand, by assumption, since $\frac{r+i-j}{2}=r-m$,
\[
\begin{split} & \1\otimes w\left(\{(x,y)\in E_{j}\times F_{i}:d(x,y)=r\}\right)\\
 & =\sum_{x\in E_{j}}w(F_{i}\cap S(x,r))\lesssim\sum_{x\in E_{j}}k^{(r-m)(p-\delta)}k^{r\delta}w(x)\\
 & =k^{(r-m)(p-\delta)}k^{r\delta}w(E_{j}).
\end{split}
\]
Thus combining the ideas above
\begin{equation}
\1\otimes w\left(\{(x,y)\in E_{j}\times F_{i}:d(x,y)=r\}\right)\lesssim\min\left\{ k^{(r-m)(p-\delta)}k^{r\delta}w(E_{j}),k^{m}w(F_{i})\right\} .\label{eq:Levelm}
\end{equation}
Taking into account (\ref{eq:levels}) and (\ref{eq:Levelm}), to
end the proof it suffices to show that
\begin{equation}
\sum_{m=0}^{r}\sum_{\substack{i,j\in\N\cup\{0\}\\
i=j+r-2m
}
}\min\left\{ k^{(r-m)(p-\delta)}k^{r\delta}w(E_{j}),k^{m}w(F_{i})\right\} \le c_{p,\delta}k^{\frac{p}{p-\delta+1}r}w(F)^{1-\frac{1}{p-\delta+1}}w(E)^{\frac{1}{p-\delta+1}}.\label{eq:target}
\end{equation}
Let us define $c_{j}=\frac{w(E_{j})}{k^{(p-\delta)j}}$ and $d_{j}=\frac{w(F_{j})}{k^{j}}$
for $j\geq0$ and $c_{j}=d_{j}=0$ for $j<0$ then, 
\begin{equation}
\sum_{j=0}^{\infty}k^{(p-\delta)j}c_{j}=w(E)\quad\mathrm{and}\quad\sum_{j=0}^{\infty}k^{j}d_{j}=w(F),\label{eq:redfcjdj}
\end{equation}
and we have that
\[
\begin{split}\sum_{m=0}^{r}\sum_{\substack{i,j\in\N\cup\{0\}\\
i=j+r-2m
}
}\left\{ k^{(r-m)(p-\delta)}k^{r\delta}w(E_{j}),k^{m}w(F_{i})\right\}  & =\sum_{m=0}^{r}\sum_{\substack{i,j\in\N\cup\{0\}\\
i=j+r-2m
}
}\min\left\{ k^{(p-\delta)(r-m+j)}k^{\delta r}c_{j},k^{m}k^{i}d_{i}\right\} \\
 & =\sum_{m=0}^{r}\sum_{\substack{i,j\in\N\cup\{0\}\\
i=j+r-2m
}
}\min\left\{ k^{\delta r}k^{\frac{(i+j+r)(p-\delta)}{2}}c_{j},k^{\frac{i+j+r}{2}}d_{i}\right\}.
\end{split}
\]
Taking the identity above into account, settling (\ref{eq:target})
reduces to show that 
\[
\sum_{m=0}^{r}\sum_{\substack{i,j\in\N\cup\{0\}\\
i=j+r-2m
}
}\min\left\{ k^{\delta r}k^{\frac{(i+j+r)(p-\delta)}{2}}c_{j},k^{\frac{i+j+r}{2}}d_{i}\right\} \leq c_{p,\delta}k^{\frac{p}{p-\delta+1}r}w(F)^{1-\frac{1}{p-\delta+1}}w(E)^{\frac{1}{p-\delta+1}}.
\]
To prove this inequality, we let $\rho$ be a real parameter to be
chosen later, and argue as follows 
\begin{eqnarray*}
 &  & \sum_{m=0}^{r}\sum_{\substack{i,j\in\N\cup\{0\}\\
i=j+r-2m
}
}\min\left\{ k^{\delta r}k^{\frac{(i+j+r)(p-\delta)}{2}}c_{j},k^{\frac{i+j+r}{2}}d_{i}\right\} \\
 &  & \leq k^{\frac{p+\delta}{2}r}\sum_{\substack{i,j\in\N\cup\{0\}\\
i<j+\rho
}
}k^{\frac{(i+j)(p-\delta)}{2}}c_{j}+k^{\frac{r}{2}}\sum_{\substack{i,j\in\N\cup\{0\}\\
i\geq j+\rho
}
}k^{\frac{i+j}{2}}d_{i}\\
 &  & \leq k^{\frac{p+\delta}{2}r}\sum_{j=0}^{\infty}k^{\frac{(j+\rho+j)(p-\delta)}{2}}c_{j}+k^{\frac{r}{2}}\sum_{i=0}^{\infty}k^{i-\frac{\rho}{2}}d_{i}\\
 &  & =k^{\frac{p+\delta}{2}r}k^{\frac{\rho(p-\delta)}{2}}\sum_{j=0}^{\infty}k^{j(p-\delta)}c_{j}+k^{\frac{r}{2}}k^{-\frac{\rho}{2}}\sum_{i=0}^{\infty}k^{i}d_{i}\\
 &  & =k^{\frac{p+\delta}{2}r}k^{\frac{\rho(p-\delta)}{2}}w(E)+k^{\frac{r}{2}}k^{-\frac{\rho}{2}}w(F).
\end{eqnarray*}
From this point the desired result follows optimizing on $\rho$. For reader's convenience we provide our optimization argument. Let \[f_{\delta,p,r}(\rho)=k^{\frac{p+\delta}{2}r}k^{\frac{\rho(p-\delta)}{2}r}w(E)+k^{\frac{r}{2}}k^{-\frac{\rho}{2}}w(F).\]
First note that
\begin{align*}
f'_{\delta,p,r}(\rho) & =\frac{p-\delta}{2}k^{\frac{p+\delta}{2}r}k^{\frac{\rho(p-\delta)}{2}}\ln(k)w(E)-\frac{1}{2}\ln(k)k^{\frac{r}{2}}k^{-\frac{\rho}{2}}w(F),\\
f''_{\delta,p,r}(\rho) & =\left(\frac{p-\delta}{2}\right)^{2}k^{\frac{(\delta-\alpha)r}{2}}k^{\frac{\rho(p-\delta)}{2}}\ln^{2}(k)w(E)+\frac{1}{4}\ln^{2}(k)k^{\frac{r}{2}}k^{-\frac{\rho}{2}}w(F)>0
\end{align*}
and hence at the critical point the minimum is attained. Now we compute
that critical point.
\begin{align*}
 & \frac{p-\delta}{2}k^{\frac{p+\delta}{2}r}k^{\frac{\rho(p-\delta)}{2}}\ln(k)w(E)=\frac{1}{2}\ln(k)k^{\frac{r}{2}}k^{-\frac{\rho}{2}}w(F)\\
\iff & (p-\delta)k^{\frac{p+\delta}{2}r+\frac{\rho(p-\delta)}{2}-\frac{r}{2}+\frac{\rho}{2}}w(E)=w(F)\\
\iff & k^{\frac{p+\delta}{2}r+\frac{\rho(p-\delta)}{2}-\frac{r}{2}+\frac{\rho}{2}}=\frac{w(F)}{w(E)(p-\delta)}\\
\iff & (p+\delta-1)r+\rho(p-\delta+1)=2\log_{k}\left(\frac{w(F)}{w(E)(p-\delta)}\right)\\
\iff & \rho=\frac{2\log_{k}\left(\frac{w(F)}{w(E)(p-\delta)}\right)}{p-\delta+1}-\frac{(p+\delta-1)r}{p-\delta+1}.
\end{align*}
Hence $f_{\delta,p,r}(\rho)$ reaches its absolute minimum at $\rho=\frac{2\log_{k}\left(\frac{w(F)}{w(E)(p-\delta)}\right)}{p-\delta+1}-\frac{(p+\delta-1)r}{p-\delta+1}.$
Finally we observe that
\begin{align*}
f_{\delta,p,r}(\rho) & =k^{\frac{p+\delta}{2}r}k^{\left[\frac{2\log_{k}\left(\frac{w(F)}{w(E)(p-\delta)}\right)}{p-\delta+1}-\frac{(p+\delta-1)r}{p-\delta+1}\right]\frac{(p-\delta)}{2}}w(E)+k^{\frac{r}{2}}k^{-\frac{1}{2}\left[\frac{2\log_{k}\left(\frac{w(F)}{w(E)(p-\delta)}\right)}{p-\delta+1}-\frac{(p+\delta-1)r}{p-\delta+1}\right]}w(F)\\
 & =k^{\left(\frac{p+\delta}{2}-\frac{(p+\delta-1)}{p-\delta+1}\frac{(p-\delta)}{2}\right)r}k^{\left[\log_{k}\left(\frac{w(F)}{w(E)(p-\delta)}\right)\right]\frac{p-\delta}{p-\delta+1}}w(E)+k^{\frac{r}{2}\left(1+\frac{(p+\delta-1)}{p-\delta+1}\right)}k^{-\left[\frac{\log_{k}\left(\frac{w(F)}{w(E)(p-\delta)}\right)}{p-\delta+1}\right]}w(F)\\
 & =k^{\left(\frac{p+\delta}{2}-\frac{(p+\delta-1)}{p-\delta+1}\frac{(p-\delta)}{2}\right)r}\left(\frac{w(F)}{w(E)(p-\delta)}\right)^{\frac{p-\delta}{p-\delta+1}}w(E)+k^{\frac{p}{p-\delta+1}r}\left(\frac{w(F)}{w(E)(p-\delta)}\right)^{-\frac{1}{p-\delta+1}}w(F)\\
 & \leq c_{p,\delta}k^{\frac{p}{p-\delta+1}r}w(F)^{\frac{p-\delta}{p-\delta+1}}w(E)^{1-\frac{p-\delta}{p-\delta+1}}=c_{p,\delta}k^{\frac{p}{p-\delta+1}r}w(F)^{1-\frac{1}{p-\delta+1}}w(E)^{\frac{1}{p-\delta+1}}
\end{align*}
and this yields the desired conclusion.
\end{proof}
At this point we are in the position to settle Corollary \ref{cor:SuffAux}.
\begin{proof}[Proof of Corollary \ref{cor:SuffAux}]
First observe that for the weak type estimate, by Lemma \ref{lem:CorSuffAux}, \eqref{eq:SuffCond} is satisfied choosing $\beta=\alpha=\frac{p}{p-1+\delta}$. For the strong type estimates note that also due to Lemma \ref{lem:CorSuffAux},  \eqref{eq:SuffCond} is satisfied choosing $\beta=\frac{p}{p-1+\delta}$ and $\alpha=\frac{q}{p-1+\delta}$, which in turn yields the desired conclusion.
\end{proof}

\section{Two weight estimates}\label{sec:twoWeights}
In this last section we gather our results regarding two weight estimates. The first of them is that with minor adjustments it is possible to prove the following two weighted version of Theorem \ref{thm:Suff}.
\begin{thm}
Let $k\geq2$ be an integer. Let $p>1$ and let $u,v$
be weights. Assume that there exist $0<\beta<1$ and $\beta\leq\alpha<p$
such that 
\begin{equation*}
\1\otimes u\left(\{(x,y)\in E\times F:\ d(x,y)=r\}\right)\lesssim k^{r\beta}v(E)^{\frac{\alpha}{p}}u(F)^{1-\frac{\alpha}{p}}.\label{eq:SuffCond2w}
\end{equation*}
Then 
\begin{equation*}
\|Mf\|_{L^{\frac{\beta}{\alpha}p,\infty}(u)}\lesssim\|f\|_{L^{\frac{\beta}{\alpha}p}(v)}\label{eq:Weakbpalpha2w}
\end{equation*}
Furthermore, if $\beta<\alpha$ then 
\begin{equation*}
\|Mf\|_{L^{p}(u)}\lesssim\|f\|_{L^{p}(v)}\label{eq:Strongpp2w}
\end{equation*}
and also 
\begin{equation*}
\|Mf\|_{L^{p'}(\sigma_{u,p})}\lesssim\|f\|_{L^{p'}(\sigma_{v,p})}\label{eq:StrongDual2w}
\end{equation*}
where $\sigma_{\rho,p}=\rho^{-\frac{1}{p-1}}$.
\end{thm}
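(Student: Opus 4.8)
The plan is to retrace the proof of Theorem \ref{thm:Suff} essentially line by line, the only real work being to keep track of which of the two weights $u,v$ occupies each slot. The engine of the whole argument is the self-adjointness of $A_{r}^{\circ}$ with respect to the counting measure, $\int (A_{r}^{\circ}h)\,g=\int h\,(A_{r}^{\circ}g)$, which carries no weight at all and is therefore untouched. The point of the asymmetric hypothesis is precisely that on the right-hand side $E$ is paired with $v$ and $F$ with $u$, exactly so that the level-set lemma below closes with $u$ on the output side and $v$ on the input side.

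First I would prove the two-weight analogue of Lemma \ref{lem:sumLevels}: under the stated hypothesis, for $r>0$ and $\lambda>0$,
\[
u\left(\left\{A_{r}^{\circ}(|f|)\geq\lambda\right\}\right)\lesssim\sum_{\substack{n\in\N\cup\{0\}\\ 1\leq 2^{n}\leq 2k^{r}}}\left(\frac{2^{n}}{k^{r}}\right)^{\frac{1-\beta}{2}\frac{p}{\alpha}}2^{\beta\frac{p}{\alpha}n}\,v\left(\left\{|f|\geq 2^{n-1}\lambda\right\}\right).
\]
The proof copies Lemma \ref{lem:sumLevels}. The dyadic decomposition \eqref{eq:troceado} of $f$ into the sublevel sets $E_{n}$ is unchanged. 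For the tail term one applies the hypothesis with $E=\{y\}$ and $F=S(y,r)$, obtaining $u(S(y,r))\lesssim k^{\frac{\beta}{\alpha}pr}v(y)$ and hence $u\big(A_{r}^{\circ}(f\chi_{\{f\geq\frac12 k^{r}\}})\neq 0\big)\lesssim k^{\frac{\beta}{\alpha}pr}v(\{f\geq\tfrac12 k^{r}\})$. For the main term the same pigeonholing produces sets $F_{n}$, and self-adjointness gives
\[
\frac{1}{k^{r}}\1\otimes u\left(\{(x,y)\in E_{n}\times F_{n}:d(x,y)=r\}\right)\simeq\int_{F_{n}}u\,A_{r}^{\circ}(\chi_{E_{n}})\geq u(F_{n})\frac{1}{2^{n+4}}\left(\frac{2^{n}}{k^{r}}\right)^{\gamma},
\]
after which the hypothesis bounds the left-hand side by $k^{r(\beta-1)}v(E_{n})^{\frac{\alpha}{p}}u(F_{n})^{1-\frac{\alpha}{p}}$, and the identical choice $\gamma=1-t\beta$, $t=\frac{1+1/\beta}{2}$, yields $u(F_{n})\lesssim(2^{n}/k^{r})^{\frac{1-\beta}{2}\frac{p}{\alpha}}2^{\beta\frac{p}{\alpha}n}v(E_{n})$.

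With this lemma the weak type estimate follows by summing over $r$ exactly as in Theorem \ref{thm:Suff}: the geometric sums in $n$ and $r$ are the same, and only $v$ replaces $w$ on the right. For the strong type, when $\beta<\alpha$ I would establish the stronger summed bound $\sum_{r=0}^{\infty}\|A_{r}^{\circ}f\|_{L^{p}(u)}\lesssim\|f\|_{L^{p}(v)}$ by the same layer-cake computation: write $\|A_{r}^{\circ}f\|_{L^{p}(u)}^{p}=p\int_{0}^{\infty}\lambda^{p-1}u(A_{r}^{\circ}f\geq\lambda)\,d\lambda$, insert the level-set lemma, change variables $s=2^{n-1}\lambda$ to factor out $\|f\|_{L^{p}(v)}^{p}$, and sum the resulting geometric series, which converges precisely because $\beta<\alpha$. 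This gives $\|Mf\|_{L^{p}(u)}\lesssim\|f\|_{L^{p}(v)}$ at once.

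The delicate step, and the one I expect to be the main obstacle, is the dual estimate, because here the two weights no longer coincide and one must be scrupulous about which of $\sigma_{u,p},\sigma_{v,p}$ sits on each side. The cleanest route is to read the summed bound of the previous step as the operator statement $\sum_{r}\|A_{r}^{\circ}\|_{L^{p}(v)\to L^{p}(u)}\lesssim 1$ and to dualize it: since $A_{r}^{\circ}$ is self-adjoint for the counting measure, the adjoint of $A_{r}^{\circ}\colon L^{p}(v)\to L^{p}(u)$ is $g\mapsto v^{-1}A_{r}^{\circ}(gu)$, mapping $L^{p'}(u)\to L^{p'}(v)$ with the same norm, and unwinding the weights through $u^{1-p'}=\sigma_{u,p}$ and $v^{1-p'}=\sigma_{v,p}$ turns this into $\sum_{r}\|A_{r}^{\circ}\phi\|_{L^{p'}(\sigma_{v,p})}\lesssim\|\phi\|_{L^{p'}(\sigma_{u,p})}$, hence into the dual maximal inequality. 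Equivalently one argues directly, writing the norm as $\sup\{\int Mf\,g\}$ and using $\int f\,A_{r}^{\circ}(g)$ together with H\"older. The one thing that must be verified is that the H\"older pairing matches the \emph{available} summed bound $\sum_{r}\|A_{r}^{\circ}g\|_{L^{p}(u)}\lesssim\|g\|_{L^{p}(v)}$ and not its transpose, the latter requiring the unavailable hypothesis $\1\otimes v(\{(x,y)\in E\times F:d(x,y)=r\})\lesssim k^{r\beta}u(E)^{\frac{\alpha}{p}}v(F)^{1-\frac{\alpha}{p}}$. Respecting this asymmetry is the entire content of the step, and it is exactly what fixes the placement of $\sigma_{u,p}$ and $\sigma_{v,p}$ in the final dual estimate; as in Theorem \ref{thm:Suff}, no $A_{p'}$-type self-improvement of the weight is needed, the single summed bound delivering both the direct and the dual strong-type inequalities.
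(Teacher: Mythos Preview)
Your approach is exactly what the paper has in mind: it offers no separate proof, only the remark that ``with minor adjustments'' the argument for Theorem~\ref{thm:Suff} carries over, and you have supplied precisely those adjustments. The two-weight level-set lemma, the weak-type bound, and the summed strong-type bound $\sum_{r}\|A_{r}^{\circ}f\|_{L^{p}(u)}\lesssim\|f\|_{L^{p}(v)}$ are all correct transcriptions of the one-weight proof with $u$ on the output side and $v$ on the input side.

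There is, however, a discrepancy at the dual step that you should flag explicitly rather than glide over. Your duality computation is correct: from $\sum_{r}\|A_{r}^{\circ}\|_{L^{p}(v)\to L^{p}(u)}<\infty$ and the self-adjointness of $A_{r}^{\circ}$ one obtains $\sum_{r}\|A_{r}^{\circ}\phi\|_{L^{p'}(\sigma_{v,p})}\lesssim\|\phi\|_{L^{p'}(\sigma_{u,p})}$, hence
\[
\|Mf\|_{L^{p'}(\sigma_{v,p})}\lesssim\|f\|_{L^{p'}(\sigma_{u,p})}.
\]
But the theorem as printed asserts $\|Mf\|_{L^{p'}(\sigma_{u,p})}\lesssim\|f\|_{L^{p'}(\sigma_{v,p})}$, with the dual weights in the opposite slots. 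Running the direct H\"older argument from the one-weight proof confirms your version: writing $\|Mf\|_{L^{p'}(\sigma_{v,p})}=\sup_{\|g\|_{L^{p}(v)}=1}\bigl|\int Mf\,g\bigr|$ and using $\int f\,A_{r}^{\circ}(g)\leq\|f\|_{L^{p'}(\sigma_{u,p})}\|A_{r}^{\circ}g\|_{L^{p}(u)}$ closes precisely with the available summed bound, whereas the printed placement would require the unavailable transpose $\sum_{r}\|A_{r}^{\circ}g\|_{L^{p}(v)}\lesssim\|g\|_{L^{p}(u)}$. In the one-weight case $u=v$ the two versions coincide, which is why the issue is invisible in Theorem~\ref{thm:Suff}; here it is not, and the printed statement appears to be a misprint. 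Your argument proves the corrected inequality, and you should say so.
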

From this Theorem it is also possible to derive the following Corollary.
\begin{cor}
Let $u,v$ be a weight such that there exists $0<\delta<1$
such that if $x\in T_{j}$, $F\subset T_{i}$ and $|i-j|\leq r$,
\begin{equation*}
u(F\cap S(x,r))\lesssim k^{\frac{i-j+r}{2}(p-\delta)}k^{r\delta}v(x).
\end{equation*}
Then 
\begin{equation*}
\|Mf\|_{L^{p}(u)}\lesssim\|f\|_{L^{p}(v)}
\end{equation*}
Furthermore, if $p<q$ then 
\begin{equation*}
\|Mf\|_{L^{q}(u)}\lesssim\|f\|_{L^{q}(v)}
\end{equation*}
and also 
\begin{equation*}
\|Mf\|_{L^{p'}(\sigma_{u,p})}\lesssim\|f\|_{L^{p'}(\sigma_{v,p})}
\end{equation*}
where $\sigma_{\rho,p}=\rho^{-\frac{1}{p-1}}$.
\end{cor}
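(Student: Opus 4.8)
The plan is to reproduce, in the two-weight setting, the two-step scheme behind Corollary~\ref{cor:SuffAux}. First I would prove a two-weight version of Lemma~\ref{lem:CorSuffAux}, turning the level-wise hypothesis into the bilinear bound that feeds the two-weight version of Theorem~\ref{thm:Suff} stated above; then I would read off the estimates by matching the exponents $\beta,\alpha$ as in the proof of Corollary~\ref{cor:SuffAux}.

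Concretely, the two-weight analogue of Lemma~\ref{lem:CorSuffAux} asserts that, under the stated hypothesis, for all finite $E,F\subset T$ and every non-negative integer $r$,
\[
\1\otimes u\left(\{(x,y)\in E\times F:\ d(x,y)=r\}\right)\lesssim c_{p,\delta}\,k^{\frac{p}{p-\delta+1}r}\,u(F)^{1-\frac{1}{p-\delta+1}}v(E)^{\frac{1}{p-\delta+1}}.
\]
Its proof copies that of Lemma~\ref{lem:CorSuffAux} line by line, with $u$ in the role that $w$ played on the $F$-side and $v$ in the role of $w$ on the $E$-side. As there, I split $E=\bigcup_j E_j$ and $F=\bigcup_i F_i$ with $E_j=E\cap T_j$, $F_i=F\cap T_i$, and for a pair $(E_j,F_i)$ with $i=j+r-2m$ I combine the counting bound
\[
\1\otimes u\left(\{(x,y)\in E_j\times F_i:d(x,y)=r\}\right)\le k^m u(F_i),
\]
valid because each $y\in F_i$ has at most $k^m$ antecedents $x\in E_j$ at distance $r$, with the consequence of the hypothesis
\[
\1\otimes u\left(\{(x,y)\in E_j\times F_i:d(x,y)=r\}\right)=\sum_{x\in E_j}u(F_i\cap S(x,r))\lesssim k^{(r-m)(p-\delta)}k^{r\delta}v(E_j),
\]
which is the two-weight form of~\eqref{eq:Levelm}. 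Setting $c_j=v(E_j)/k^{(p-\delta)j}$ and $d_j=u(F_j)/k^j$, the normalisation~\eqref{eq:redfcjdj} now reads $\sum_j k^{(p-\delta)j}c_j=v(E)$ and $\sum_j k^j d_j=u(F)$, and the double sum in~\eqref{eq:levels} collapses to exactly the object optimised over the parameter $\rho$ in the proof of Lemma~\ref{lem:CorSuffAux}. Since that optimisation only ever manipulates the sequences $(c_j)$ and $(d_j)$ and never uses that they originate from a single weight, it transfers unchanged and delivers the displayed estimate, with $v(E)$ and $u(F)$ landing in the expected slots.

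With this bilinear bound available, I feed it into the two-weight version of Theorem~\ref{thm:Suff}. Reading that theorem at an exponent $P$, its hypothesis requires $\1\otimes u\lesssim k^{r\beta}v(E)^{\alpha/P}u(F)^{1-\alpha/P}$, so the bound above forces $\beta=\frac{p}{p-\delta+1}$ and $\alpha=\frac{P}{p-\delta+1}$. Taking $P=p$ gives $\beta=\alpha=\frac{p}{p-\delta+1}<1$, the endpoint case, and yields the weak-type bound $\|Mf\|_{L^{p,\infty}(u)}\lesssim\|f\|_{L^{p}(v)}$. Taking $P=q>p$ instead gives $\beta=\frac{p}{p-\delta+1}<\frac{q}{p-\delta+1}=\alpha$, with $\alpha<q$ since $p-\delta+1>1$; this is the strict-gap branch $\beta<\alpha$ of the theorem, which produces both $\|Mf\|_{L^{q}(u)}\lesssim\|f\|_{L^{q}(v)}$ and the dual estimate $\|Mf\|_{L^{q'}(\sigma_{u,q})}\lesssim\|f\|_{L^{q'}(\sigma_{v,q})}$, exactly mirroring Corollary~\ref{cor:SuffAux}.

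The only point requiring genuine care is the exponent bookkeeping that opens the strict gap $\beta<\alpha$ separating the strong and dual bounds from the weak-type endpoint. This gap arises not by tightening the assumption but by a deliberate mismatch: the bilinear bound is produced by the lemma at the hypothesis exponent $p$, fixing $\beta=\frac{p}{p-\delta+1}$, while the theorem is then invoked at the larger exponent $q$, raising $\alpha$ to $\frac{q}{p-\delta+1}>\beta$. This is coherent with the observation following Corollary~\ref{cor:SuffAux} that the hypothesis, being governed by $\frac{i-j+r}{2}=r-m\ge0$, only weakens as the exponent grows. The transfer of the one-weight optimisation to two weights, by contrast, is pure bookkeeping, since the argument in Lemma~\ref{lem:CorSuffAux} is indifferent to whether $(c_j)$ and $(d_j)$ come from one weight or two.
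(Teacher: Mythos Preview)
Your approach is correct and matches the paper's: the paper offers no independent proof of this corollary, simply indicating that it follows from the two-weight version of Theorem~\ref{thm:Suff}, and the route you sketch---a two-weight analogue of Lemma~\ref{lem:CorSuffAux} followed by the parameter matching $\beta=\frac{p}{p-\delta+1}$, $\alpha=\frac{P}{p-\delta+1}$ from the proof of Corollary~\ref{cor:SuffAux}---is precisely that derivation. One cosmetic remark: what your argument actually delivers (weak type at $p$, strong type and dual at $q>p$ with $\sigma_{\cdot,q}$) mirrors the one-weight Corollary~\ref{cor:SuffAux} exactly, whereas the corollary as displayed claims \emph{strong} type at $p$ and the dual at $p'$ with $\sigma_{\cdot,p}$; these appear to be typos in the statement rather than a defect in your proof.
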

At this point a natural question would be to consider whether testing conditions like the ones introduced by Sawyer in \cite{STesting} are sufficient for strong type estimates to hold. Our next result shows that even in the one weight setting that is not the case.
\begin{thm} Let $p>1$ and
\[
w(x)=\sum_{j=0}^{\infty}k^{(p-1)j}\chi_{T_{j}}(x).
\]
Then we have that 
\[
\|Mf\|_{L^{p}(w)}\lesssim\|f\|_{L^{p}(w)}
\]
does not hold but for every ball $B$
\[
\int_{B}M(\chi_{B}\sigma)^{p}w\lesssim\int_{B}\sigma
\]
where $\sigma=w^{-\frac{1}{p-1}}$.
 \end{thm}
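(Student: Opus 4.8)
The plan is to treat the two assertions separately. The failure of the strong type $(p,p)$ inequality is already contained in Proposition \ref{Prop:Neg2}, since the weight here is literally the one considered there, so I would simply cite it. All the real work lies in the testing inequality, and the strategy is to reduce it to the single pointwise bound $M\sigma\lesssim\sigma$ together with two elementary identities read off from the explicit formula for $w$.

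First I would record those identities. Since $w(x)=k^{(p-1)j}$ for $x\in T_{j}$, the dual weight is
\[
\sigma(x)=w(x)^{-\frac{1}{p-1}}=k^{-j},\qquad x\in T_{j},
\]
and therefore $\sigma(x)^{p}w(x)=k^{-jp}k^{(p-1)j}=k^{-j}=\sigma(x)$, i.e. $\sigma^{p}w=\sigma$ pointwise. Granting this, for an arbitrary ball $B$ I would bound $M(\chi_{B}\sigma)\le M\sigma$ and hence
\[
\int_{B}M(\chi_{B}\sigma)^{p}w\le\int_{B}(M\sigma)^{p}w,
\]
so that once $M\sigma\lesssim\sigma$ is known (with constant independent of $k$) the right-hand side is $\lesssim\int_{B}\sigma^{p}w=\int_{B}\sigma$, which is exactly the claimed testing bound.

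It thus remains to prove $M\sigma\lesssim\sigma$. Here I would pass to $M^{\circ}$, which is comparable to $M$ with constant independent of $k$, and estimate the spherical averages of $\sigma$ directly. Fixing $y\in T_{i}$ and $s\ge0$, the sphere $S(y,s)$ meets the level $T_{i+s-2m}$ for $0\le m\le\min(s,i)$, with $|S(y,s)\cap T_{i+s-2m}|\lesssim k^{s-m}$, while $|S(y,s)|\gtrsim k^{s}$. Consequently
\[
A_{s}^{\circ}\sigma(y)\lesssim\frac{1}{k^{s}}\sum_{m=0}^{\min(s,i)}k^{s-m}k^{-(i+s-2m)}=\frac{k^{-i}}{k^{s}}\sum_{m=0}^{\min(s,i)}k^{m}\lesssim k^{\min(s,i)-i-s}\le k^{-i}=\sigma(y),
\]
uniformly in $s$, so $M^{\circ}\sigma(y)\lesssim\sigma(y)$ and therefore $M\sigma\lesssim\sigma$.

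The one point deserving care, and the step I expect to be the crux, is the uniformity in $k$ of the last estimate: it relies on the fact that the dominant contribution to $\int_{S(y,s)}\sigma$ comes from the shallowest reachable vertex — the nearest common ancestor, where $\sigma$ is largest — and that the geometric sum $\sum_{m}k^{m}$ is controlled by $\frac{k}{k-1}\le2$ for $k\ge2$. It is worth emphasising that this produces no contradiction with the failure of the strong type: the estimate $M\sigma\simeq\sigma$ holds only at the exponent $s=1$, and a parallel computation shows $M_{s}\sigma\not\lesssim\sigma$ for every $s>1$, so the hypothesis of Corollary \ref{cor:A1Ap} is not met and there is no route to the strong type through the dual weight.
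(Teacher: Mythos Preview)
Your proof is correct and follows essentially the same route as the paper's: both cite Proposition~\ref{Prop:Neg2} for the failure of the strong type, compute $\sigma^{p}w=\sigma$, and deduce the testing inequality from the pointwise bound $M\sigma\lesssim\sigma$. The only difference is that the paper invokes \cite{ORRS} for that pointwise bound, whereas you supply the spherical-average computation directly; your extra remark on why this does not feed back into a strong-type estimate via Corollary~\ref{cor:A1Ap} is correct but not part of the paper's argument.
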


\begin{proof} By Proposition \ref{Prop:Neg2} we know that the weighted strong type estimate does not hold for $w$. 
Now we observe that note that 
\[
\sigma(x)=\sum_{j=0}^{\infty}\frac{1}{k^{j}}\chi_{T_{j}}(x).
\]
For this weight it was shown in \cite{ORRS} that 
\[
M\sigma(x)\lesssim\sigma(x).
\]
Taking this into account 
\[
\int_{B}M(\chi_{B}\sigma)^{p}w\lesssim\int_{B}\sigma{}^{p}w=\int_{B}\sigma
\]
and the testing condition holds. \end{proof}

\bibliographystyle{plain}
\bibliography{refs}

\end{document}